  \def\MR#1{}
\DeclareMathOperator{\chara}{char}
\newcommand{\so}{\mathcal{O}}
\newcommand{\p}{\mathfrak{p}}
\newcommand\Z{\mathbb{Z}}
\newcommand\Q{\mathbb{Q}}
\newcommand\Pro{\mathbb{P}}
\newcommand\e{\textup{\'e}}
\newcommand\End{\textup{End}}
\newcommand\Aut{\textup{Aut}}
\newcommand\Gal{\mathrm{Gal}}
\newcommand\Shaf{\mathrm{Shaf}}
\newcommand\Pic{\mathrm{Pic}}
\newcommand\Spec{\mathop{\mathrm{Spec}}}
\newcommand\sh{\mathrm{sh}}
\newcommand\alb{\mathrm{alb}}
\newcommand\Alb{\mathrm{Alb}}
\newcommand\Image{\mathrm{Im}}
\newcommand\m{\mathfrak{m}}
\newcommand\sep{\mathrm{sep}}
\newcommand\id{\mathrm{id}}
\theoremstyle{plain}
\theoremstyle{definition}
\newtheorem*{lemma*}{Lemma}
\newtheorem*{prop*}{Proposition}
\newtheorem*{theorem*}{Theorem}
\newtheorem*{claim*}{Claim}
\newtheorem{definition*}{Definition}
\theoremstyle{plain}
\newtheorem{theoremsub}[subsection]{Theorem}
\newtheorem{propsub}[subsection]{Proposition}
\newtheorem{lemmasub}[subsection]{Lemma}
\theoremstyle{definition}
\newtheorem{definitionsub}[subsection]{Definition}
\newtheorem{remarksub}[subsection]{Remark}
\DeclareSymbolFont{cyrletters}{OT2}{wncyr}{m}{n}
\DeclareMathSymbol{\Sha}{\mathalpha}{cyrletters}{"58}
\begin{document}
\title{Reduction of bielliptic surfaces}
\author[T.\ Takamatsu]{Teppei Takamatsu}


\subjclass[2020]{Primary 11G35; Secondary 11G25}
\address{Department of Mathematics (Hakubi Center),
Kyoto University,
Kitashirakawa, Oiwake-cho, Sakyo-ku,
Kyoto 606-8502, JAPAN}
\email{teppeitakamatsu.math@gmail.com}

\keywords{Bielliptic surfaces, Shafarevich conjecture, N\'{e}ron model}

\begin{abstract}
A bielliptic surface (or hyperelliptic surface) is a smooth surface with a numerically trivial canonical divisor such that the Albanese morphism is an elliptic fibration.
In the first part of this paper, we study the structure of bielliptic surfaces over a field of characteristic different from $2$ and $3$, in order to prove the Shafarevich conjecture for bielliptic surfaces with rational points.
Furthermore, we demonstrate that the Shafarevich conjecture generally fails for bielliptic surfaces without rational points.
In particular, this paper completes the study of the Shafarevich conjecture for minimal surfaces of Kodaira dimension $0$.

In the second part of this paper, we study a N\'{e}ron model of a bielliptic surface. 
We establish the potential existence of a N\'{e}ron model for a bielliptic surface when the residual characteristic is not equal to $2$ or $3$.
\end{abstract}

\maketitle

\setcounter{section}{0}

\section{Introduction}

The Shafarevich conjecture for abelian varieties, proved by Faltings and Zarhin (see \cite[VI {$\S1$}, Theorem 2]{Faltings1992}) asserts the finiteness of isomorphism classes of abelian varieties of a fixed dimension over a fixed number field that admit good reduction away from a fixed finite set of finite places.
In \cite{Javanpeykar2017}, Javanpeykar and Loughran conjectured that the Shafarevich conjecture holds for more general families of varieties.
They also show that the Lang-Vojta conjecture for integral points of hyperbolic varieties implies the Shafarevich conjecture for hypersurfaces and complete intersections of general type (\cite[Theorem 1.5]{Javanpeykar2017}).
The Shafarevich conjecture is proved in many cases.
For example, this has been proved for del Pezzo surfaces (\cite{Scholl1985}), flag varieties (\cite{flag}), certain Fano threefolds (\cite{Javanpeykar2018}) (\cite{TakamatsuFano}), proper hyperbolic polycurves (\cite{Javanpeykar2015}, \cite{Nagamachi2019}), K3 surfaces (\cite{Andre1996}, \cite{She2017}, \cite{Takamatsu2020a}), Enriques surfaces (\cite{Takamatsu2020b}), hyper-k\"{a}hler varieties (\cite{Andre1996}, \cite{Takamatsuhk})).
Furthermore, it is verified for hypersurfaces in abelian varieties (\cite{Lawrence2020}) and very irregular varieties (\cite{Kramer}).
However, it is still open in general.

In the first part of this paper (Section \ref{sectionshaf}), we shall consider the same problem for bielliptic surfaces.
Our main theorem is the following.

\begin{theoremsub}[Theorem \ref{shaf}]\label{shafintro}
Let $F$ be a finitely generated field over $\Q$, and $R$ be a finite type algebra over $\Z$ which is a normal domain with fraction field $F$. Then, the set
\[
\left\{X \left|
\begin{array}{l}
X\colon \textup{bielliptic surface over }F \textup{ which admits a rational point},\\
X \textup{ has good reduction at any height }1 \textup{ prime ideal }\p \in \Spec R
\end{array}
\right.\right\}/F \textup{-isom}
\]
is finite.
\end{theoremsub}

\begin{remarksub}
\label{rem:newbiell}
In Theorem \ref{shafintro}, we only consider bielliptic surfaces with rational points.
This restriction is essential.
Indeed, in Proposition \ref{prop:withoutsection}, we show that the Shafarevich conjecture for general bielliptic surfaces fails.
Our counterexample is based on the example in (\cite[footnote 27]{Mazur}), which is a counterexample to the Shafarevich conjecture for genus 1 curves.
\end{remarksub}

As stated above, the Shafarevich conjecture is also justified in the cases of K3 surfaces and Enriques surfaces (see \cite{She2017}, \cite{Takamatsu2020a}, and \cite{Takamatsu2020b}).
Therefore, Theorem \ref{shafintro} completes the study of the Shafarevich conjecture for minimal surfaces of Kodaira dimension $0$.
We remark that we assume the existence of a rational point on $X$ in Theorem \ref{shafintro}, as in the case of abelian varieties. 

In Section \ref{sectionshaf}, first, we shall study the structure of bielliptic surfaces with a rational point and its reduction nature. 
After that, we give a proof of Theorem \ref{shafintro} by using that structure result and the Shafarevich conjecture for (products of) elliptic curves. 
Here, we also use the finitely generatedness of the Mordell-Weil groups of elliptic curves to reduce the problem to the case of products of elliptic curves.

In the second part (Section \ref{sectionneron}), we shall study a N\'{e}ron model of a bielliptic surface. 
Let $K$ be a discrete valuation field, and $X$ a smooth separated finite type scheme over $K$.
A N\'{e}ron model $\mathcal{X}$ of $X$ is a smooth separated finite type model over $\so_{K}$ satisfying some extension property, which is known as the N\'{e}ron mapping property.
N\'{e}ron proves the existence of such models for abelian varieties (\cite{Neron1964}). 
Moreover, Liu and Tong prove the existence of N\'{e}ron models for smooth proper curves of positive genus (see \cite{Liu2016} for a more precise statement).
In general, a N\'{e}ron model need not exist.
Our main theorem is the following.
\begin{theoremsub}[see Theorem \ref{gengenneron} for more precise statements.]\label{gengenneronintro}
Let $K$ be a strictly Henselian discrete valuation field with residue characteristic different from $2$ and $3$, and $X$ be a bielliptic surface over $K$.
Then $X$ potentially admits a N\'{e}ron model, i.e., there exists a finite separable extension $L/K$, such that $X_{L'}$ admits a N\'{e}ron model for any finite extension $L'/L$.
\end{theoremsub}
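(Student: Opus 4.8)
The plan is to realize $X$ as a free quotient of an abelian surface and to construct its N\'eron model as the quotient of the N\'eron model of that surface. By the structure theory of bielliptic surfaces developed in Section~\ref{sectionshaf}, after a finite separable extension of $K$ we may write $X=(E\times F)/G$, where $E,F$ are elliptic curves, $G$ is a finite abelian group acting on $E$ by translations through a finite subgroup $G\hookrightarrow E$ and on $F$ by automorphisms fixing the origin, and the diagonal action on $A:=E\times F$ is free with quotient $\pi\colon A\to X$ a finite \'etale $G$-torsor. Since the Bagnera--De Franchis list forces $|G|\in\{2,3,4,6,8,9\}$, the order of $G$ is a product of powers of $2$ and $3$, hence prime to the residue characteristic $p$.

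First I would enlarge $L$ so that both $E$ and $F$ acquire semistable reduction and so that the translation subgroup $G\hookrightarrow E(L)$ injects into the special fibre of the N\'eron model $\mathcal{E}$ of $E$: the part landing in the toric or abelian part injects automatically because $|G|$ is prime to $p$, while the part mapping to the component group is absorbed by passing to a sufficiently ramified extension, which only enlarges that component group. The translations then extend to $\mathcal{E}$ since it is a group scheme and the points $t_g$ are now $L$-rational, and the automorphisms of $F$ extend to $\mathcal{F}$ by functoriality of N\'eron models. Writing $\mathcal{A}:=\mathcal{E}\times\mathcal{F}$ for the N\'eron model of $A$, the induced $G$-action is free on the closed fibre: for $g\neq e$ the first coordinate is moved by the translation $t_g$, which by construction does not reduce to the identity. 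Hence $\mathcal{A}\to\mathcal{X}:=\mathcal{A}/G$ is a finite \'etale $G$-torsor and $\mathcal{X}$ is a smooth separated model of $X$ over $\oo_L$; moreover every hypothesis just arranged (semistability and the injectivity of $G$) persists under any further finite extension $L'/L$, since component groups only grow, so the same recipe produces a candidate $\mathcal{X}'=\mathcal{A}'/G$ over $\oo_{L'}$.

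It remains to verify the N\'eron mapping property $\mathcal{X}(T)=X(T_\eta)$ for smooth $\oo_L$-schemes $T$; uniqueness of extensions is immediate from separatedness, so only existence is at issue. A point $x\in X(T_\eta)$ that lifts to $A(T_\eta)$ extends at once: lift, extend using the N\'eron property of $\mathcal{A}$, and push down through $\pi$. The essential subtlety is that over the strictly Henselian base the pointed set $H^{1}(L,G)$ is nontrivial (indeed $\cong G$, as the residue field is separably closed), so a general $x$ lifts only over a tamely ramified extension $M/L$, on which $\Gal(M/L)$ permutes the $G$-orbit through a cocycle valued in $G$. When $F$ has good reduction --- which always holds in types $3$--$7$, where $F$ carries extra automorphisms and hence has potential good reduction --- this difficulty disappears: the induced morphism $\mathcal{X}\to\mathcal{B}$ onto the N\'eron model $\mathcal{B}$ of the Albanese $B=E/G$ becomes a proper $\mathcal{F}$-bundle (here one uses that the translation quotient $\mathcal{E}/G$ already realizes $\mathcal{B}$, the one-dimensional case of the present construction), so, $\mathcal{B}$ being a N\'eron model, a section over $T_\eta$ extends over $\mathcal{B}$ and then lifts over all of $T$ by the valuative criterion applied to the proper fibres.

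The main obstacle is therefore the case in which $F$ has bad reduction, which occurs only in types $1$ and $2$: then neither projection of $X$ fibres properly over a base admitting a N\'eron model (the second fibration lands on $\Pro^{1}$), and the toric directions in $\mathcal{A}_{s}$ must be confronted directly. Here I would handle the ramified points by comparing $\mathcal{X}$ with the quotient $\mathcal{X}^{M}=\mathcal{A}^{M}/G$ formed over the tame extension $\oo_{M}$, where $x$ does extend. The extended point is $\Gal(M/L)$-invariant because passing to the quotient by $G$ annihilates the cocycle, and one descends it to $\mathcal{X}(\oo_{L})$ by showing that the formation of the free quotient is compatible with N\'eron models under tame base change, in the spirit of Edixhoven's analysis; the delicate point is tracking the resulting behaviour of the component groups of the toric part. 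Granting this, $\mathcal{X}'$ is the N\'eron model of $X_{L'}$ for every finite $L'/L$, which is the asserted potential existence.
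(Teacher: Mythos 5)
Your single quotient $\mathcal{X}=(\mathcal{E}\times\mathcal{F})/G$ is essentially the construction of the paper's Theorem \ref{genneron}, which does yield the N\'eron model under an extra hypothesis (for instance good reduction of the Albanese curve). In general, however, this quotient is not even a \emph{weak} N\'eron model, and that is where your argument has a genuine gap. The obstruction you correctly identify ($H^{1}(L,G)\neq 0$, so a point of $X(T_\eta)$ need not lift to $E\times F$) is measured concretely by the component group of $\mathcal{E}$: the Albanese map factors as $\mathcal{X}\to\mathcal{E}/G\hookrightarrow\mathcal{B}$, and the open immersion $\mathcal{E}/G\hookrightarrow\mathcal{B}$ hits only the image of $\pi_{0}(\mathcal{E}_{k})\to\pi_{0}(\mathcal{B}_{k})$, which is a proper subgroup when $E$ has multiplicative reduction (e.g.\ a Tate curve with $\mu_{2}\subset G$). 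A point $y\in X(L)$ whose Albanese image specializes outside that subgroup cannot extend to $\mathcal{X}(\oo_{L})$, and no further finite extension removes the defect, since the relevant quotient of the component group survives ramified base change. This also invalidates your repair in the case where $F$ has good reduction: the claim that ``the translation quotient $\mathcal{E}/G$ already realizes $\mathcal{B}$'' is exactly what fails, so the section of $\mathcal{B}$ over $T$ produced by its N\'eron property need not factor through $\mathcal{E}/G$, and the fibrewise lifting breaks down. In the remaining case you write ``Granting this,'' i.e.\ the descent from the tame extension together with control of the component groups is left unproved --- but that is the entire difficulty.

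The paper closes this gap with a different device: it chooses finitely many sections $x_{0},\dots,x_{s}\in X(K)$ representing $\pi_{0}(\mathcal{A}_{k})/n\,\pi_{0}(\mathcal{A}_{k})$ (here $\mathcal{A}$ is the N\'eron model of the Albanese and $n$ the integer of Remark \ref{structurealb}), forms for each $x_{i}$ the corresponding quotient model $\mathcal{X}_{i}=(\mathcal{A}_{i}\times\mathcal{B}_{i})/\mathcal{G}_{i}$ --- the same variety generically, but a different integral model --- and glues the $\mathcal{X}_{i}$ along their generic fibres. One must then prove separatedness of the glued scheme (using that the translates $\Pi_{i}$ of $\mathrm{Im}(n)$ in the component group are pairwise disjoint), that the family $\{\mathcal{X}_{i}\}$ is a weak N\'eron model (every $K$-point of $X$ extends into the $\mathcal{X}_{i}$ indexed by its class), and finally the mapping property via Zariski--Nagata purity, as in Lemma \ref{finet}. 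Without this gluing step, or an equivalent mechanism for reaching the missing components, the statement is not established.
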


The key idea of Theorem \ref{gengenneronintro} is to take a quotient of a N\'{e}ron model of abelian surfaces. 
If $X$ satisfies some condition, we can show that this quotient is the N\'{e}ron model again (see Theorem\ref{genneron} for a sufficient condition).
However, in general, that quotient is not necessarily a N\'{e}ron model.
To treat the general case, we use some gluing arguments.

\subsection*{Notations and Terminologies}
\begin{itemize}
\item
Let $A \rightarrow B$ be a morphism of algebras.
For a scheme $X$ over $A$, we denote its base change $X\times_{A}B$ by $X_{B}$. 
\item
For any scheme $S$ and a point $s\in S$, we denote its residue field by $\kappa(s)$.
\item
For any discrete valuation field $K$, we denote its valuation ring by $\so_{K}$.
We denote the completion of $K$ by $\widehat{K}$.
\item
Let $K$ be a discrete valuation field and $X$ a smooth separated finite type scheme over $K$.
A scheme (resp.\ algebraic space) $\so_{K}$-model $(\mathcal{X}, i)$ of $X$ is a scheme (resp.\ algebraic space) $\mathcal{X}$ which is separated and of finite type over $\so_{K}$ with an isomorphism $i\colon \mathcal{X}_{K} \simeq X$.
We often omit $i$ and say that $\mathcal{X}$ is a scheme (resp.\ algebraic space) $\so_{K}$-model of $X$. 
We assume that "a $\so_K$-model" simply refers to a $\so_K$-scheme model.
An algebraic space $\so_{K}$-model $\mathcal{X}$ of $X$ is called smooth (resp.\ proper) (resp.\ projective) if  $\mathcal{X}$ is smooth (resp.\ proper) (resp.\ projective) over $\so_{K}$.
\item
Let $X$ be a smooth projective variety over $k$.
We denote the Albanese torsor of $X$ by $\Alb (X)$, and the Albanese morphism by
\[
\alb \colon X \rightarrow \Alb (X).
\]
\end{itemize}

\subsection*{Acknowledgments}
The author is deeply grateful to his advisor Naoki Imai for his deep encouragement and helpful advice. The author also would like to thank Yohsuke Matsuzawa and Tetsushi Ito for helpful suggestions.
The author also thanks the referee for constructive suggestions.
This work was supported by JSPS KAKENHI Grant number JP19J22795, JP22J00962, JP22KJ1780.

\section{Bielliptic surfaces and good reduction}\label{sectionshaf}
In this section, we first recall the definition of bielliptic surfaces and study their basic properties. 

\begin{definitionsub}
Let $k$ be a field.
Let $X$ be a smooth projective surface over $k$, and $Y$ a smooth projective curve over $k$.
Let $f \colon X \rightarrow Y$ be a proper surjective morphism over $k$.
We say $f$ is an \emph{elliptic fibration} if $f_{*} \so_{X} =\so_{Y}$ and the generic fiber of $f$ is a smooth genus 1 curve over the generic point of $Y$.
\end{definitionsub}

\begin{definitionsub}
Let $k$ be a field, and $X$ a smooth projective surface over $k$.
We say $X$ is a \emph{bielliptic surface over $k$} if
the Kodaira dimension $\kappa (X)$ is equal to $0$,  
the second Betti number $b_{2} (X)$ is equal to $2$, and 
the Albanese morphism $\alb \colon X \rightarrow \Alb (X)$ is an elliptic fibration.
\end{definitionsub}

Note that, we mainly treat the case where $\chara k \neq 2,3$ where the condition of the Albanese morphism is unnecessary (see \cite[p. 26]{BombieriMumford}).

First, we review the basic property of bielliptic surfaces over an algebraically closed field.

\begin{propsub}
\label{prop:fibration}
Let $k$ be a field of characteristic different from $2$ and $3$, and $X$ a bielliptic surface over $k$.
Then the Albanese torsor $\Alb (X)$ is 1-dimensional and
the Albanese morphism $\alb \colon X \rightarrow \Alb (X)$ is an elliptic fibration  
such that any geometric fiber is a smooth elliptic curve.
Moreover, $X_{\overline{k}}$ admits another elliptic fibration $g \colon X_{\overline{k}} \rightarrow \Pro^1_{\overline{k}}$, and all the fibers of $g$ are irreducible.
\end{propsub}

\begin{proof}
This follows from \cite[Theorem 8.6, Lemma 8.7 and Theorem 8.10]{Badescu2001}.
\end{proof}

It is well-known that a bielliptic surface over an algebraically closed field is written as a quotient of certain abelian surfaces (cf.\ \cite[Subsection 10.24]{Badescu2001}).
We shall generalize this fact to fields that are not necessarily algebraically closed.

\begin{lemmasub}\label{structure}
Let $k$ be a field of characteristic different from $2$ and $3$,
and $X$ a bielliptic surface over $k$ with a rational point $x \in X(k)$. 
Let $A' = \Alb (X)$ be the Albanese torsor of $X$, and $B$ the fiber $\alb^{-1}(\alb(x))$. 
By Proposition \ref{prop:fibration}, we regard $A'$ and $B$ as elliptic curves with rational points coming from $x$.
Then there exists an elliptic curve $A$ over $k$ which is isogenous to $A'$ and a finite \'{e}tale subgroup scheme $G \hookrightarrow A$ with a group scheme monomorphism $G \rightarrow \Aut_{B/k}$, such that there exists an isomorphism
\[
X \simeq_{k} (A \times B) /G
\]
which sends $x$ to $\overline{(0,0)}.$
Here, $\Aut_{B/k}$ is the automorphism scheme of the variety (rather than the group scheme) $B$.
Furthermore, the projection $X\rightarrow A/G$ via the above isomorphism is isomorphic to the Albanese morphism $\alb$, and $X \rightarrow B/G \simeq \Pro^{1}_{k}$ is an elliptic fibration.
\end{lemmasub}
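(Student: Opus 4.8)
The plan is to exhibit the Albanese morphism as an isotrivial elliptic fibration and to recover $A$ as the finite cover of $A'$ that trivializes it, descending all the data to $k$ by means of the rational point $x$. First I would record the geometric picture over $\overline{k}$: the recalled description of bielliptic surfaces yields an isomorphism $X_{\overline{k}}\simeq (A_0\times B_0)/G_0$ for elliptic curves $A_0,B_0$ and a finite group $G_0$ acting on $A_0$ by translations (via $G_0\hookrightarrow A_0$) and on $B_0$ by automorphisms of the variety (via a faithful $G_0\hookrightarrow\Aut_{B_0}$), in such a way that the first projection is the Albanese morphism and the second, $X_{\overline{k}}\to B_0/G_0\simeq\Pro^1$, is an elliptic fibration. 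In particular $\alb\colon X\to A'$ is a smooth proper morphism all of whose geometric fibers are isomorphic to the elliptic curve $B=\alb^{-1}(\alb(x))$, i.e.\ an isotrivial genus-one fibration with structure group $\Aut_{B/k}$, and the point $x$ rigidifies it by fixing both the base point $\alb(x)\in A'(k)$ and the identification of the fiber over $\alb(x)$ with $B$.

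Such an isotrivial fibration is classified by its monodromy, a homomorphism $\mu\colon\pi_1^{\et}(A'_{\overline{k}},\alb(x))\to\Aut_{B_{\overline{k}}}$, which the rigidification by $x$ renders equivariant for the natural action of $\Gal(\overline{k}/k)$. The geometric description shows that $\alb$ is already trivialized by the finite cover $A_0\to A'_{\overline{k}}$, so $\mu$ factors through a finite quotient and has finite image $G:=\Image(\mu)\subset\Aut_{B_{\overline{k}}}$, acting faithfully on $B$. The connected finite \'etale cover $A\to A'$ attached to $\Ker(\mu)$ is then a genus-one curve; by Galois equivariance it is defined over $k$, the rigidifying data provide a $k$-rational point above $\alb(x)$ which we take as the origin, and the covering map becomes an isogeny with $G=\Ker(A\to A')\hookrightarrow A$ a finite \'etale subgroup scheme over $k$. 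Reading the same monodromy as the deck action on the fibers supplies the group-scheme monomorphism $\rho\colon G\hookrightarrow\Aut_{B/k}$ over $k$.

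Pulling the family back along the trivializing cover produces a $G$-equivariant isomorphism $X\times_{A'}A\simeq A\times B$ over $A$, which descends to the desired $X\simeq_{k}(A\times B)/G$; tracing the rigidifying data shows that it carries $x$ to $\overline{(0,0)}$. The first projection is $\alb$ by construction, and the second is $X\to B/G$: since $X$ is not an abelian surface the linear part of $\rho$ is nontrivial, so $B/G\simeq\Pro^1$ over $\overline{k}$ and the image of $x$ upgrades this to $\Pro^1_{k}$, with generic fiber $A$, i.e.\ an elliptic fibration. The main obstacle is precisely this descent: one must verify that the monodromy, the cover $A\to A'$, the subgroup $G$, and the action $\rho$ are all defined over $k$ rather than only over $\overline{k}$, and that the origin of $A$ can be chosen above $0_{A'}$. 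This is exactly where the hypothesis $x\in X(k)$ is indispensable, for it provides the $k$-rational base point and fiber identification that make the classifying data $\Gal(\overline{k}/k)$-equivariant and hence effectively descendable.
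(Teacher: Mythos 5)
Your geometric picture is correct and the monodromy strategy is a legitimate alternative to the paper's route, but as written the proof begs the question at exactly the point you yourself flag as ``the main obstacle'': every descent statement is asserted rather than proved. Two steps are genuinely missing. First, for an isotrivial fibration whose structure group is the positive-dimensional group scheme $\Aut_{B/k}$, the classifying object is a class in nonabelian $H^{1}_{\et}(A',\Aut_{B/k})$, not a priori a homomorphism $\mu\colon\pi_1^{\et}(A'_{\overline{k}},\alb(x))\to\Aut_{B_{\overline{k}}}$; extracting a genuine, canonically defined $\mu$ (so that $\Ker(\mu)$ and $\Image(\mu)$ make sense and are $\Gal(k^{\sep}/k)$-stable) already uses that the descent cocycle for $A\times B\to X$ can be taken \emph{constant} along the trivializing cover, which is part of what is being proved. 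Second, and more seriously: even granting that the cover $A\to A'$, the subgroup scheme $G$, and the action $\rho$ descend to $k$, your argument only produces the isomorphism $X\times_{A'}A\simeq A\times B$ over $\overline{k}$. Over $k$ the pullback $X\times_{A'}A$ is a priori merely a \emph{form} of $A\times B$, and ``which descends to the desired $X\simeq_k(A\times B)/G$'' is precisely the nontrivial claim. The paper closes this gap by producing a $k$-rational point on the covering surface (its fiber over $x$ splits into rational points because there $k$ is taken separably closed), using the Albanese morphism to endow the cover with an abelian-variety structure having that point as origin, and then invoking the \'etaleness of the Hom scheme of abelian varieties to conclude that the $\overline{k}$-isomorphism, being a homomorphism between abelian varieties defined over $k$, is itself defined over $k$. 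Your proposal contains no substitute for this rigidity step.

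It is also worth noting how the paper handles the two levels of generality, since your argument conflates them. Over a separably closed $k$ it descends the covering $A'\times B\to X$ by decomposing $\pi_{\ast}\oo$ into character eigensheaves and using that multiplication by $n$ on $\Pic_{X/k}$ is \'etale; over a general $k$ it does \emph{not} descend from $k^{\sep}$ at all, but instead reconstructs the translation action $A'\times X\to X$ directly over $k$ from the relative Jacobian of the second fibration $X\to\Pro^{1}_{k}$ away from the multiple fibers, extending by minimality of $X$. If you want to salvage the monodromy route you must (i) justify that $\mu$ is well defined and Galois-equivariant (the abelianness of its image removes the conjugation ambiguity once a point of $A$ above $\alb(x)$ is fixed, and the splitting of $\pi_1^{\et}(A')\to\Gal(k^{\sep}/k)$ induced by $\alb(x)\in A'(k)$ gives you that point), and (ii) supply the rigidity argument descending the trivialization. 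As it stands, (ii) is a genuine gap.
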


\begin{proof}
We will prove this in three steps: first, when $k$ is an algebraically closed field; second, when $k$ is a separably closed field; and finally, in the case of a general field.

In the case when $k = \overline{k}$, then it follows from \cite[Subsection 10.24]{Badescu2001}.
For future convenience, we will outline the proof.
Let $g\colon X \rightarrow \Pro^1$ be an elliptic fibration given in Proposition \ref{prop:fibration}.
Let $S \subset \Pro^1$ be the image of the non-smooth locus of $g$, which is a closed subscheme of dimension $0$.
Then we can define the action 
\[
A' \times g^{-1}(\Pro^{1} \setminus S) \rightarrow g^{-1}(\Pro^{1} \setminus S)
\]
as follows.
For any $k$-algebra $R$, take $P \in g^{-1}(\Pro^{1}\setminus S) (R).$
Then we have the abelian scheme 
\[
T:= g^{-1} (g(P)) = g^{-1} (\Pro^{1} \setminus S) \times_{\Pro^{1} \setminus S} g(P)
\]
over $\Spec R$ with a section $P$, and we have a morphism $T \rightarrow A'_{R}$ coming from the composition 
\[
T \rightarrow g^{-1}(\Pro^{1}\setminus S) \hookrightarrow X \rightarrow A'.
\]
Then we have
\[
{A'}_{R} \simeq {A'}_{R}^{\vee} \rightarrow T^{\vee},
\]
and the right-hand side acts on $T$ canonically. Therefore we get the desired action.

By the minimality of $X$ (see \cite[Theorem 10.21]{Badescu2001}), one can extend the above action to $\sigma \colon A' \times X \rightarrow X$ (since $A'$ is an elliptic curve, it is enough to extend the action of the generic point of $A'$). 
Let $n$ be the intersection number 
of a fiber of $\alb$ and a fiber of $g$.

Then we have a diagram
\begin{equation}\label{xyalb}
\vcenter{
\xymatrix{
A' \times X \ar[r]^-{\sigma} \ar[d]^{\mathrm{id} \times \alb} & X \ar[d]^{\alb} \\
A' \times A' \ar[r]^-{\phi} &A' \\
}
}
\end{equation}

where $\phi (a, a'):= na + a'$. 
This diagram is commutative since it is commutative over $\overline{k}$ as in \cite[Subsection 10.24]{Badescu2001}.
By the above diagram, $B := \alb^{-1}(0)$ is stable under the action 
of $G' := A'[n]$ given by $\sigma$.
Therefore we have the morphism $G' \rightarrow \Aut_{B/k}$ corresponding to 
$\sigma \colon G' \times B \rightarrow B$. 
Moreover, we get the desired isomorphism
$(A' \times B) /G' \simeq X$  via the action $\sigma$ since it induces an isomorphism over $\overline{k}$. 
We put 
\begin{align*}
A &:= A'/ \ker(G' \rightarrow \Aut_{B/k}), \\
G &:= G'/ \ker(G' \rightarrow \Aut_{B/k})
\end{align*}
so that $G \rightarrow \Aut_{B/k}$ is injective.
These $A, B,$ and $G$ satisfy the desired conditions, and it finishes the proof for the case where $k= \overline{k}$.

Next, we treat the case where $k$ is separably closed. 
Then by the argument in the case where $k=\overline{k}$, we have a finite \'{e}tale morphism
$\pi \colon A'_{\overline{k}} \times B_{\overline{k}}\rightarrow X_{\overline{k}}$ which is the quotient map by an action of $G'_{\overline{k}}$, where $G':= A'[n]$ for some $n\in \{2,3,4,6,8,9\}$. 
We shall prove that this morphism is defined over $k$. 
The morphism $\pi$ can be decomposed as 
\[
\xymatrix
{
\pi\colon
A'_{\overline{k}} \times B_{\overline{k}} \ar[r]^-{\pi_{1}}_-{\simeq} &
\underline{\Spec}_{X_{\overline{k}}}\pi_{\ast}\so_{A'_{\overline{k}}\times B_{\overline{k}}} \ar[r]^-{\pi_{2}} &
X_{\overline{k}}.
}
\]
Here, $\pi_{\ast} \so_{A'_{\overline{k}}\times B_{\overline{k}}}$ is decomposed as $\oplus_{\chi \in \widehat{G'(\overline{k})}} \mathcal{F}_{\chi}$, where $\mathcal{F}_{\chi}$ is a line bundle on $X_{\overline{k}}$ associated with a character $\chi\in \widehat{G'(\overline{k})}$. 
Since the $n$-multiplication map on the Picard scheme $\Pic_{X/k}$ is an \'{e}tale morphism, each $\mathcal{F}_{\chi}$ descends to a line bundle on $X$, hence the morphism $\pi_{2}$ comes from the morphism 
\[
\pi_{2,0} \colon
Y:= \underline{\Spec}_{X}\oplus_{\chi \in \widehat{G'(\overline{k})}}\mathcal{F}_{\chi} \rightarrow X
\]
 over $k$. 
Since $\pi_{2,0}$ is a finite \'{e}tale morphism, the fiber over $x$ consists of $k$-rational points.
In particular, $\pi_{1}(0,0)$ descends to a $k$-rational point of $Y$. We can equip $Y$ with the structure of abelian variety over $k$ with the zero section $\pi_{1}(0,0)$ by the Albanese morphism.
Therefore the morphism $\pi_{1}$ is a homomorphism over $\overline{k}$ between abelian varieties defined over $k$. 
Since the homomorphism scheme of abelian varieties is \'{e}tale (\cite[Proposition 7.14]{Geer2014}), the morphism $\pi_{1}$ is also defined over $k$.
Hence our $\pi$ is defined over $k$, i.e., there exists a morphism 
$A' \times B \rightarrow X$.
Through this morphism, we have an isomorphism
$(A'\times B)/G'\rightarrow X$. 
This isomorphism induces the desired isomorphism
$
(A \times B)/G \rightarrow X,
$
where 
\begin{align*}
A &:= A'/\ker (G' \rightarrow \Aut_{B/k}), \\
G &:= G'/\ker (G' \rightarrow \Aut_{B/k}).
\end{align*}

Finally, we show the general case.
Note that, we have an elliptic fibration 
\[
g' \colon X_{k^{\sep}} \rightarrow B_{k^{\sep}}/G_{k^{\sep}} \simeq \Pro^{1}_{k^{\sep}}.
\]
Then by the argument in \cite[Proposition 5.6]{Creutz2018}, we obtain an elliptic fibration 
\[
g \colon X \rightarrow \Pro^{1}
\]
over $k$ such that $g_{k^{\sep}}$ is isomorphic to $g'$.
Using this $g$, we can prove the theorem in exactly the same way as in the case where $k = \overline{k}$.
It finishes the proof.
\end{proof}

\begin{remarksub}\label{structurealb}
In the proof of the above proposition, we take a quotient $A$ (resp.\ $G$) of $A'$ (resp.\ $G'$).
However, if we do not require $G \rightarrow \Aut_{B/k}$ to be injective, we do not have to take quotients in the final part of the proof.
More precisely, $A:=A'$ and $G:=G'$ gives the description
\[
X \simeq_{k} (A \times B)/G,
\]
where $A$ is the Albanese variety of $X$, $B := \alb^{-1}(\alb(x))$, $G:= A[n]$ for some $n \in \{2,3,4,6,8,9\}$ with a group scheme morphism $G \rightarrow \Aut_{B/k}$. 
In this description, the projection $X \rightarrow A/G \simeq A$ gives the Albanese morphism, where $A/G \simeq A$ is given by the $n$-multiplication map. Note that the integer $n$ is independent of the choice of a rational point $x\in X(K)$.
\end{remarksub}
\begin{definitionsub}\label{goodreduction}
Let $K$ be a discrete valuation field, and $X$ a bielliptic surface over $K$. 
We say $X$ admits \emph{good reduction} if there exists a smooth proper algebraic space $\mathcal{X}$ over $\so_{K}$ such that $\mathcal{X}_{K}\simeq X$.
\end{definitionsub}

Here, we also recall the definition of a N\'{e}ron model.

\begin{definitionsub}\label{sakiNeron}
Let $K$ be a discrete valuation field, $X$ a smooth separated finite type scheme over $K$, and $\mathcal{X}$ a smooth $\so_{K}$-model of $X$.
The $\so_K$-model $\mathcal{X}$ is a N\'{e}ron model of $X$ if for any smooth $\so_{K}$-scheme $Z$ and $K$-morphism $u_{K} \colon Z_{K} \rightarrow X$, there exists a unique extension $u \colon Z \rightarrow \mathcal{X}$ of $u_{K}$.
\end{definitionsub}

\begin{remarksub}\label{remneron}
In Definition \ref{sakiNeron}, if an extension $u$ exists, then it is automatically unique since $\mathcal{X}$ is separated over $\so_{K}$.
\end{remarksub}

\begin{lemmasub}\label{unique}
Let $K$ be a discrete valuation field with residue field $k$ whose characteristic is different from $2$ and $3$. 
For any bielliptic surface $X$ over $K$, the following are equivalent.
\begin{enumerate}
\item
$X$ admits good reduction.
\item
There exists a smooth proper scheme $\so_{K}$-model $\mathcal{X}$ of  $X$.
\item
There exists a smooth projective scheme $\so_{K}$-model $\mathcal{X}$ of $X$.
\end{enumerate}
\end{lemmasub}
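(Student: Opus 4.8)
The plan is to prove the nontrivial implication $(1)\Rightarrow(2)$, the converse being immediate since every scheme is an algebraic space. So I start from a smooth proper algebraic space $\mathcal{X}$ over $\oo_{K}$ with $\mathcal{X}_{K}\simeq X$ and try to upgrade it to a scheme. The strategy is to produce an invertible sheaf on $\mathcal{X}$ that is ample on both the generic and the special fibre: since a proper algebraic space over $\Spec\oo_{K}$ carrying a relatively ample invertible sheaf is automatically a projective scheme, this gives $(2)$.

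First I would analyse the special fibre $\mathcal{X}_{k}$. By flatness together with smooth proper base change for $\ell$-adic and coherent cohomology (which is available for algebraic spaces), the numerical invariants are constant across the two fibres: $\chi(\oo_{\mathcal{X}_{k}})=\chi(\oo_{X})=0$ and the Betti numbers satisfy $b_{1}=b_{2}=2$. Moreover, the canonical sheaf of the bielliptic surface $X$ is torsion of some order $n$, and since $\mathcal{X}$ is regular (being smooth over the DVR $\oo_{K}$) and $\mathcal{X}_{k}$ is a principal divisor with trivial self-restriction $\oo_{\mathcal{X}}(\mathcal{X}_{k})|_{\mathcal{X}_{k}}$, the relative canonical sheaf restricts to a torsion sheaf $\omega_{\mathcal{X}_{k}}$; hence $\kappa(\mathcal{X}_{k})=0$. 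These constraints single out the bielliptic case in the Enriques--Kodaira classification (the torsion of $\omega_{\mathcal{X}_{k}}$ rules out the ruled surface over an elliptic curve, which otherwise shares the invariants $b_{1}=b_{2}=2$, $\chi=0$). Thus $\mathcal{X}_{k}$ is a bielliptic surface; in particular it is a projective scheme and, being a free quotient of an abelian surface, it contains no curve of negative self-intersection, since every integral curve $C$ satisfies $C^{2}=2p_{a}(C)-2\ge 0$ as $K_{\mathcal{X}_{k}}$ is numerically trivial and there are no rational curves. This step is the main obstacle: one must know that the classification of minimal surfaces of Kodaira dimension $0$ is valid in the category of algebraic spaces, i.e.\ that a minimal smooth proper algebraic-space surface with these invariants is genuinely a scheme of bielliptic type, rather than a non-schematic object with the correct cohomology.

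Next I would transport an ample class from the generic fibre. As $\mathcal{X}$ is regular and $\mathcal{X}_{k}$ is a principal divisor, the restriction map $\Pic(\mathcal{X})\to\Pic(X)$ is surjective. Choosing an ample (and effective) $L_{K}$ on $X$ and an extension $\mathcal{L}$, the restriction $\bar{L}:=\mathcal{L}|_{\mathcal{X}_{k}}$ is independent of the chosen extension, because $\oo_{\mathcal{X}}(\mathcal{X}_{k})|_{\mathcal{X}_{k}}$ is trivial. Intersection numbers are locally constant in the flat proper family, so $\bar{L}^{2}=L_{K}^{2}>0$, and $\bar{L}$ is effective.

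Finally I would verify that $\bar{L}$ is ample on $\mathcal{X}_{k}$. By the Hodge index theorem the N\'{e}ron--Severi lattice of the projective surface $\mathcal{X}_{k}$ has signature $(1,\rho-1)$. For any integral curve $C$, fixing an ample $H$ on $\mathcal{X}_{k}$ gives $C\cdot H>0$ and $\bar{L}\cdot H>0$, so neither class is numerically trivial; since the orthogonal complement of $\bar{L}$ is negative definite while $C^{2}\ge 0$ by the absence of negative curves, we cannot have $\bar{L}\cdot C\le 0$. Hence $\bar{L}\cdot C>0$ for every integral curve, and the Nakai--Moishezon criterion yields the ampleness of $\bar{L}$. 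Therefore $\mathcal{L}$ is ample on both fibres, hence relatively ample over $\Spec\oo_{K}$, and $\mathcal{X}$ is a projective scheme, establishing $(2)$.
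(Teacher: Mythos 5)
Your proof is essentially correct, but it takes a genuinely different route from the paper. The paper works in the opposite direction: it lifts an ample line bundle \emph{from the special fibre} using formal deformation theory (unobstructed because $H^{2}(\mathcal{X}_{k},\oo_{\mathcal{X}_{k}})=0$ in characteristic away from $2$ and $3$) together with Grothendieck's existence theorem, which only produces a projective scheme model over the completion $\widehat{\oo_{K}}$; it must then descend back to $\oo_{K}$ by realizing the model as a free $\mu_{m}$-quotient of the N\'{e}ron model of the covering abelian surface, invoking \cite[Theorem 7.2.1]{Bosch1990}. You instead spread out an ample class \emph{from the generic fibre} (using regularity of $\mathcal{X}$ and surjectivity of $\Pic(\mathcal{X})\to\Pic(X)$) and verify ampleness of its restriction to $\mathcal{X}_{k}$ via Nakai--Moishezon, exploiting the fact that a bielliptic special fibre carries no curves of negative self-intersection. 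This buys you a proof directly over $\oo_{K}$, with no completion and no descent step, and it isolates exactly why the analogous statement can fail for K3 surfaces (where $(-2)$-curves on the special fibre can kill the positivity of the restricted class). The paper's route, on the other hand, produces the covering abelian surface and its N\'{e}ron model as a by-product, which it reuses in Lemma \ref{neronsub}, Proposition \ref{criteria} and Proposition \ref{neron}.

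Two points deserve slightly more care. First, your exclusion of $\bar{L}\cdot C<0$ is under-justified as written: negative definiteness of $\bar{L}^{\perp}$ only rules out $\bar{L}\cdot C=0$. To rule out $\bar{L}\cdot C<0$ you should either use the effectivity of $\bar{L}$ that you established (write its zero divisor as $\sum m_{i}C_{i}$ and use $C_{i}^{2}\ge 0$, $C_{i}\cdot C_{j}\ge 0$ to get nefness first), or invoke the light-cone form of the Hodge index theorem: $\bar{L}$ and $C$ both have non-negative square and positive intersection with an ample $H$, hence lie in the closure of the same component of the positive cone and satisfy $\bar{L}\cdot C\ge 0$. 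Second, the ``main obstacle'' you flag is resolved by a known theorem of Knutson: a smooth proper $2$-dimensional algebraic space over a field is a scheme, and a smooth proper surface of non-negative Kodaira dimension is projective, so the Enriques classification does apply to $\mathcal{X}_{k}$; the paper uses this implicitly as well.
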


\begin{proof}
$(3) \Rightarrow (2) \Rightarrow (1)$ is clear. We shall show $(1)\Rightarrow (3)$. 
Let $\mathcal{X}$ be a smooth proper algebraic space $\so_{K}$-model of $X$. 
Let $\widehat{\mathcal{X}}$ be a formal completion of $\mathcal{X}_{\widehat{\so_{K}}}$ along its special fiber $\mathcal{X}_{k}$, where $\widehat{\so_{K}}$ is a completion of $\so_{K}$. 
We note that $\mathcal{X}_{k}$ is a bielliptic surface over $k$. 
Indeed, since $\mathcal{X}_{k}$ is a principal Cartier divisor on $\mathcal{X}$, we have $\omega^{\otimes m}_{\mathcal{X}/\so_{K}} \simeq \so_{\mathcal{X}}$, where $m$ is the order of $\omega_{X/K}$. 
Therefore $\omega_{\mathcal{X}_{k}/k}^{\otimes m}$ is trivial, and $\mathcal{X}_{k}$ is a minimal surface of Kodaira dimension $0$ with the same Betti numbers as $X$. 
Hence $\mathcal{X}_{k}$ is a bielliptic surface, and we have $H^{2}(\mathcal{X}_{k}, \so_{\mathcal{X}_{k}})=0$. 
By \cite[Corollary 8.5.5]{Fantechi2005}, one can lift an ample line bundle $L_{k}$ on $\mathcal{X}_{k}$ to $\widehat{L}$ on $\widehat{\mathcal{X}}$.
By Grothendieck's existence theorem (\cite[Theorem 6.3]{Knutson1971}), the line bundle $\widehat{L}$ also lifts to $L$ on $\mathcal{X}_{\widehat{\so_{K}}}$ as an invertible sheaf. 
The categorical equivalence of Grothendieck's existence theorem implies that for any coherent sheaf $F$ on $\mathcal{X}_{\widehat{\so_{K}}}$, $F\otimes L^{\otimes n}$ is globally generated for sufficiently large $n$. 
This also holds over a finite scheme cover of $\mathcal{X}_{\widehat{\so_{K}}}$, hence $L$ is an ample line bundle.
Therefore, we have a smooth projective scheme model $\mathcal{X}_{\widehat{\so_{K}}}$ over $\widehat{\so_{K}}$.
Let $\phi \colon \omega_{\mathcal{X}_{\widehat{\so_{K}}}/\widehat{\so_{K}}}^{\otimes m}\simeq \so_{\mathcal{X}_{\widehat{\so_{K}}}}$ be the base change of the isomorphism $\omega^{\otimes m}_{\mathcal{X}/\so_{K}} \simeq \so_{\mathcal{X}}$.
We take a cyclic covering $\mathcal{Y}$ associated with $\phi$, i.e.,
\[
Y := \underline{\mathop{\mathrm{Spec}}}_{\mathcal{X}_{\widehat{\so_{K}}}} \bigoplus_{r=0}^{m-1} \omega_{\mathcal{X}_{\widehat{\so_{K}}/\so_{K}}}^{\otimes r},
\]
where the module on the right-hand side is equipped with the algebra structure given by $\phi$.
Since $m$ is $2, 3, 4,$ or $6$ (see \cite[p.37]{BombieriMumford}), $\mathcal{Y}$ is finite \'{e}tale over $\mathcal{X}_{\widehat{\so_{K}}}$.
The base change $Y_{\overline{\widehat{K}}}$ of the generic fiber  
$Y := \mathcal{Y}_{\widehat{K}}$ is an abelian surface since its canonical sheaf is trivial and its first Betti number is not equal to zero.
Moreover, by Hensel's lemma, there exists a finite unramified extension $L/ \widehat{K}$ such that $Y_{L}$ admits an $L$-rational point, i.e., $Y_{L}$ has a structure of an abelian variety. 
Therefore, by \cite[Proposition 1.4.2]{Bosch1990}, the $\so_L$-model $\mathcal{Y}_{\so_{L}}$ satisfies the N\'{e}ron mapping property, and $\mathcal{Y}$ also satisfies the N\'{e}ron mapping property by a descent argument.
Using \cite[Theorem 7.2.1]{Bosch1990}, the smooth proper N\'{e}ron model $\mathcal{Y}$ descends to the smooth proper N\'{e}ron model $\mathcal{Y}_{0}$ over $\so_{K}$ whose generic fiber is $Y_{0}$ which is the cyclic covering of $X$ associated with $\phi_{0}$. 
By the N\'{e}ron mapping property, the cyclic action on $Y_{0}$ can be extended to that on $\mathcal{Y}_{0}$, and this action is free since its base change to $\widehat{\so_{K}}$ is the cyclic action on $\mathcal{Y}$.
Therefore, we can take a finite $\e$tale quotient $\mathcal{Y}_{0}/\mu_{m}$ which is separated of finite type. Hence it gives a smooth proper scheme model over $\so_{K}$ of $X$.
Note that $\mathcal{Y}_0$ is projective over $\so_{K}$ by \cite[Theorem 6.4.1]{Bosch1990}, hence $\mathcal{Y}_{0}/\mu_{m}$ is also a smooth projective.
\end{proof}

\begin{remarksub}\label{modelunique}
In general, if a smooth proper variety $X$ over a discrete valuation field admits a smooth proper scheme $\so_{K}$-model and a N\'{e}ron model, then the smooth proper scheme model of $X$ is isomorphic to the N\'{e}ron model, by van der Waerden's purity theorem (see \cite[Corollaire (21.12.16)]{Grothendieck1967}).
Hence a smooth proper $\so_{K}$-model of $X$ is unique for such $X$.
Note that, for general $X$, a smooth proper scheme $\so_K$-model of $X$ need not be unique (see \cite[Remark 6.3]{LiedtkeMatsumoto}).
\end{remarksub}

\begin{lemmasub}\label{neronsub}
Let $K$ be a discrete valuation field with residue field $k$ whose characteristic is $p$. Let $A$ be an abelian variety over $K$. Let $G \subset A$ be a finite subgroup scheme over $K$. 
Suppose that the order of $G(\overline{K})$ is coprime to $p$.
Let $\mathcal{A}$ be the N\'{e}ron model of $A$.
Then there exists the N\'{e}ron model $\mathcal{G}$ of $G$
and a natural closed immersion $\mathcal{G} \rightarrow \mathcal{A}$. 
\end{lemmasub}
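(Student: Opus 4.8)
The plan is to identify $\mathcal G$ with a clopen subgroup scheme of the $N$-torsion of $\mathcal A$, where $N$ is the order of $G$. Since $|G(\overline K)|$ is prime to $p$, the group scheme $G$ is \'etale of order $N$ prime to $p$, so $N \in \oo_K^{\times}$; being commutative of order $N$ it is killed by $N$, hence $G \subseteq A[N]$. By definition $\mathcal G$ is a smooth separated $\oo_K$-model of finite type of the finite $K$-scheme $G$; as smooth morphisms are flat, every component of $\mathcal G$ dominates $\Spec\oo_K$, and since the generic fibre has dimension $0$ the structure morphism has relative dimension $0$. Thus $\mathcal G$ is quasi-finite, \'etale and separated over $\oo_K$. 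First I would use the N\'eron mapping property of $\mathcal A$ for the smooth scheme $\mathcal G$ and the generic-fibre inclusion $G \hookrightarrow A$ to obtain an extension $f\colon \mathcal G \to \mathcal A$; applying the uniqueness clause to the smooth scheme $\mathcal G \times \mathcal G$ shows that $f$ is compatible with the group laws, i.e.\ a homomorphism.

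Next I would factor $f$ through the $N$-torsion. As $[N]^{\ast}$ acts by multiplication by $N$ on invariant differentials and $N \in \oo_K^{\times}$, the map $[N]\colon \mathcal A \to \mathcal A$ induces an isomorphism on relative differentials, and since both sides are smooth over $\oo_K$ it is \'etale. Hence $\mathcal A[N] \coloneqq \Ker([N])$, being the pullback of $[N]$ along the identity section, is \'etale, quasi-finite and separated over $\oo_K$, and $\mathcal A[N]\hookrightarrow \mathcal A$ is a closed immersion since the identity section of the separated group scheme $\mathcal A$ is one. Because $[N]\circ f$ vanishes on $\mathcal G_K = G$ while $\mathcal G$ is flat (so $\mathcal G_K$ is schematically dense) and $\mathcal A$ is separated, $[N]\circ f$ vanishes, so $f$ factors as $\mathcal G \xrightarrow{h} \mathcal A[N] \hookrightarrow \mathcal A$, and it suffices to prove that $h$ is a closed immersion.

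As $\mathcal G$ and $\mathcal A[N]$ are \'etale over $\oo_K$, the homomorphism $h$ is \'etale. It is a monomorphism: $\Ker(h)$ is \'etale over $\oo_K$ with trivial generic fibre (since $h_K\colon G\hookrightarrow A[N]$ is a closed immersion), its identity section is a clopen immersion, and the complementary open-closed part is flat over $\oo_K$ with empty generic fibre, hence empty; so $\Ker(h)$ is trivial. An \'etale monomorphism is an open immersion, so $h$ is an isomorphism of $\mathcal G$ onto an open subscheme $U \subseteq \mathcal A[N]$.

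The crux, and the step I expect to be the real obstacle because $\mathcal A[N]$ is only quasi-finite (not finite) over $\oo_K$, is to show that $U$ is also closed. Being open, $U$ is constructible, so it is enough to check that $U$ is stable under specialization. A nontrivial specialization in the quasi-finite scheme $\mathcal A[N]$ goes from a point $x$ over the generic point to a point $y$ over the closed point; suppose $x \in U$, so $x \in h_K(G)$. Because $\mathcal A[N]$ is \'etale over $\oo_K$, the local ring $\oo_{\mathcal A[N],y}$ is a discrete valuation ring unramified over $\oo_K$, so its fraction field $K' \coloneqq \kappa(x)$ is unramified over $K$. Letting $g$ be the unique preimage of $x$ under the closed immersion $h_K$, viewed as a $K'$-point of $G$, the N\'eron mapping property of $\mathcal G$ applied to the \'etale (hence smooth) $\oo_K$-algebra $\oo_{K'}$ extends $g$ to $\widetilde g \in \mathcal G(\oo_{K'})$; then $h(\widetilde g)$ is a section of $\mathcal A[N]$ over $\oo_{K'}$ extending $x$, which by separatedness meets the closed fibre in $y$. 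Hence $y \in U$, so $U$ is closed and $h$ is a clopen, in particular closed, immersion; composing with $\mathcal A[N]\hookrightarrow \mathcal A$ yields the closed immersion $\mathcal G \hookrightarrow \mathcal A$.
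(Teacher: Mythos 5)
Your argument is correct, but it follows a genuinely different route from the paper's. The paper never maps $\mathcal{G}$ into $\mathcal{A}$ directly: it introduces the N\'eron model $\mathcal{A}'$ of $A/G$, uses \cite[Proposition 7.3.6]{Bosch1990} to produce $\pi'\colon \mathcal{A}'\to\mathcal{A}$ with $\pi'\circ\pi=[n]$, deduces that $\pi\colon\mathcal{A}\to\mathcal{A}'$ is an \'etale isogeny, and then identifies $\mathcal{G}$ with $\Ker(\pi)$ by a one-line functor-of-points computation ($\Ker(\mathcal{A}(Z)\to\mathcal{A}'(Z))=\Ker(A(Z_K)\to(A/G)(Z_K))=G(Z_K)$ for smooth $Z$). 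There the closedness is free, being the pullback of the identity section of the separated scheme $\mathcal{A}'$, so the paper entirely avoids the step you correctly single out as the crux. Your version trades the auxiliary quotient $A/G$ and the isogeny-splitting input for a hands-on topological argument inside $\mathcal{A}[N]$: open immersion plus stability of the image under specialization, the latter extracted from the N\'eron mapping property of $\mathcal{G}$ over the unramified discrete valuation rings $\oo_{\mathcal{A}[N],y}$. That argument goes through; the one place deserving a word is that $\oo_{\mathcal{A}[N],y}$ is only essentially of finite type over $\oo_K$, so the mapping property as stated (for finite type smooth $Z$) does not literally apply --- you should pass to a principal affine neighbourhood of $y$ on which $\mathcal{A}[N]$ is \'etale with generic fibre $\Spec K'$ (or invoke the standard fact that $\mathcal{X}(R')=X(K')$ for local-\'etale extensions $R'/\oo_K$). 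Both proofs ultimately rest on the same input, namely that $[N]$ is \'etale on N\'eron models when $N$ is prime to the residue characteristic; the paper's is shorter, yours is more self-contained in that it stays inside $\mathcal{A}$ and makes explicit that $\mathcal{G}$ is a clopen subgroup scheme of $\mathcal{A}[N]$.
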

\begin{proof}
Let $\mathcal{A}'$ be the N\'{e}ron model of $A/G$.
Then the natural morphism $\pi: \mathcal{A} \rightarrow \mathcal{A}'$ is an \'{e}tale isogeny.
Indeed, by \cite[Proposition 7.3.6]{Bosch1990}, we have a morphism $\pi' \colon \mathcal{A}' \rightarrow \mathcal{A}$ with $\pi' \circ \pi = n$, where $n$ is the order of $G(\overline{K})$.
Since $n$ is coprime to $p$, $\pi' \circ \pi$ is \'{e}tale (\cite[Lemma 7.3.2]{Bosch1990}). Hence, $\pi$ is quasi-finite and flat (\cite[Proposition 2.4.2, Lemma 7.3.1]{Bosch1990}). 
Moreover, the kernel of $\pi$ is \'{e}tale, since $\pi' \circ \pi$ is \'{e}tale. So $\pi$ is \'{e}tale.
Let $\mathcal{K}$ be the kernel of $\pi$, which is \'{e}tale separated of finite type over $\so_{K}$.
Then $\mathcal{G}$ is canonically isomorphic to $\mathcal{K}$, since for any smooth $\so_{K}$-scheme $Z$,
\begin{align*}
\mathcal{K}(Z) &= \ker (\mathcal{A}(Z)\rightarrow \mathcal{A}'(Z))\\ 
&= \ker (A(Z_{K})\rightarrow (A/G)(Z_{K})) \\
&= G(Z_{K}).
\qedhere
\end{align*}
\end{proof}

\begin{propsub}\label{criteria}
Let $K$ be a discrete valuation field with residue field $k$ whose characteristic is different from $2$ and $3$. Let $X$ be a bielliptic surface over $K$ which admits a rational point $x\in X(K)$. Let $X \simeq (A \times B) /G$ be the isomorphism given in Lemma \ref{structure}. Then the following are equivalent.
\begin{enumerate}
\item
The bielliptic surface $X$ admits good reduction.
\item
The elliptic curves $A$ and $B$ admit good reduction.
\end{enumerate}
\end{propsub}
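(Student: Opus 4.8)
The plan is to read off the reduction behaviour of $X$ from that of the abelian surface that covers it, using the two fibration structures recorded in Lemma \ref{structure} and Remark \ref{structurealb}. The implication $(2)\Rightarrow(1)$ is the ``quotient of good models'' construction, while for $(1)\Rightarrow(2)$ I would recover good reduction of the Albanese first, and then trivialise the Albanese fibration after a prime-to-$p$ cover to reach $B$.

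For $(2)\Rightarrow(1)$, suppose $A$ and $B$ have good reduction, with abelian scheme models $\mathcal{A},\mathcal{B}$ over $\oo_K$. Since $G\hookrightarrow A$ is finite \'etale of order prime to the residue characteristic $p$ (its order divides $n^2$ with $n\in\{2,3,4,6,8,9\}$, and $p\neq 2,3$), Lemma \ref{neronsub} extends it to a finite \'etale subgroup scheme $\mathcal{G}\hookrightarrow\mathcal{A}$. The homomorphism $G\to\Aut_{B/k}$ is given by $K$-automorphisms of $B$, which extend uniquely to $\oo_K$-automorphisms of the N\'eron model $\mathcal{B}$; by uniqueness these assemble into a diagonal action of $\mathcal{G}$ on $\mathcal{A}\times\mathcal{B}$ (translation on the first factor, the extended action on the second) prolonging the $G$-action on $A\times B$. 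This action is free, because $\mathcal{G}\hookrightarrow\mathcal{A}$ is a monomorphism, so translation makes the graph $(g,a)\mapsto(a,a+g)$ a closed immersion. Hence $(\mathcal{A}\times\mathcal{B})/\mathcal{G}$ exists as a smooth proper algebraic space over $\oo_K$ with generic fibre $(A\times B)/G\simeq X$, which is good reduction in the sense of Definition \ref{goodreduction}.

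For $(1)\Rightarrow(2)$, by Lemma \ref{unique} I may take a smooth proper \emph{scheme} model $\mathcal{X}$ of $X$, and by the proof of that lemma its special fibre is again bielliptic, so $H^2(\mathcal{X}_k,\oo_{\mathcal{X}_k})=0$. First I would show that the Albanese variety $A'$ (an elliptic curve isogenous to $A$) has good reduction: the vanishing of $H^2(\oo)$ makes $\Pic^0_{\mathcal{X}/\oo_K}$ smooth, and being proper with abelian fibres it is an abelian scheme, so $\Pic^0_{X/K}\cong A'$ has good reduction; since good reduction is an isogeny invariant, $A$ does too. It then remains to treat $B$. Writing $A'$ for the Albanese variety, Remark \ref{structurealb} presents $X$ as $(A'\times B)/A'[n]$ with Albanese morphism factoring as the projection $q\colon X\to A'/A'[n]$ followed by the canonical isomorphism $A'/A'[n]\cong A'$. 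Let $\mathcal{A}'$ be the abelian scheme model of $A'$ and $\psi\colon\mathcal{A}'\to\mathcal{A}'/\mathcal{A}'[n]$ the quotient isogeny, which is finite \'etale since $n$ is prime to $p$. By the N\'eron mapping property $q$ extends to $\bar q\colon\mathcal{X}\to\mathcal{A}'/\mathcal{A}'[n]$, and I form $\mathcal{Z}\coloneqq\mathcal{X}\times_{\mathcal{A}'/\mathcal{A}'[n],\psi}\mathcal{A}'$. Then $\mathcal{Z}\to\mathcal{X}$ is finite \'etale, so $\mathcal{Z}$ is a smooth proper scheme over $\oo_K$; on generic fibres, pulling back the $A'[n]$-torsor $q$ along $\psi$ trivialises it, so $\mathcal{Z}_K\cong A'\times B$. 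Thus the abelian surface $A'\times B$ has a smooth proper model, hence good reduction (a smooth proper model of an abelian variety is an abelian scheme, cf.\ \cite{Bosch1990}), and since $B$ is both a subvariety and a quotient of $A'\times B$, it has good reduction as well.

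The main obstacle is the analysis of $B$ in $(1)\Rightarrow(2)$. The delicate points are that the Albanese morphism genuinely extends to the integral models, that the base change $\mathcal{Z}$ stays finite \'etale (this is exactly where $n$ being prime to the residue characteristic is used), and that pulling back along $\psi$ trivialises the Albanese torsor so as to identify $\mathcal{Z}_K$ with $A'\times B$; deducing good reduction of the abelian surface from the mere existence of a smooth proper model, and then descending it to the factor $B$, is the last point I would need to pin down carefully.
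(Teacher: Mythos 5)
Your argument is correct, and the direction $(2)\Rightarrow(1)$ is essentially the paper's proof verbatim: extend the $G$-action to $\mathcal{A}\times\mathcal{B}$ by the N\'eron mapping property, use Lemma \ref{neronsub} to get $\mathcal{G}\hookrightarrow\mathcal{A}$ closed and hence a free action, and take the finite \'etale quotient. (One small imprecision there: $G$ need not consist of $K$-rational points, so the action on $\mathcal{B}$ is extended by applying the N\'eron mapping property to the smooth $\oo_K$-scheme $\mathcal{G}\times\mathcal{B}$, not by extending individual $K$-automorphisms; the conclusion is unaffected.) In $(1)\Rightarrow(2)$ you agree with the paper on recovering the Albanese from $\Pic^{0}_{\mathcal{X}/\oo_K}$, but you treat $B$ differently. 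The paper extends the Albanese morphism to $\alb\colon\mathcal{X}\to\Pic^{0}_{\mathcal{Y}/S}$ (with $\mathcal{Y}=\Pic^{0}_{\mathcal{X}/S}$), checks flatness fiberwise, and observes that the fibre over the zero section is already a proper flat model of $B$ with smooth geometric fibres --- one step. You instead pull $\mathcal{X}$ back along the finite \'etale isogeny $\psi\colon\mathcal{A}'\to\mathcal{A}'/\mathcal{A}'[n]$ to manufacture a smooth proper model of the covering abelian surface $A'\times B$ and then descend good reduction to the factor $B$. This route is valid and has the side benefit of producing the good model of the abelian surface cover integrally (which is what Theorem \ref{shaf} actually uses), but it requires the two verifications you flagged: (i) the generic fibre of the pullback is $A'\times B$ --- note that $q$ is not an $A'[n]$-torsor but the associated bundle $A'\times^{A'[n]}B$, whose pullback along the torsor $\psi_K$ is trivialized by a short contracted-product computation; and (ii) a smooth proper model of an abelian variety forces good reduction, which is true (e.g.\ by smooth proper base change together with the N\'eron--Ogg--Shafarevich criterion, or by mapping the model to the N\'eron model and invoking Zariski's main theorem), but is not quite the literal content of a single citation to \cite{Bosch1990}. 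Neither point is a gap; the paper's fibre-over-the-zero-section argument simply avoids both.
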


\begin{proof}
First we shall show $(1)\Rightarrow (2)$. 
By Lemma \ref{unique}, we have $\mathcal{X} \rightarrow S:= \Spec \so_{K}$ which is a smooth projective scheme $\so_{K}$-model of $X$. 
Note that $\Pic_{\mathcal{X}_{\kappa(s)}/\kappa(s)}$ is smooth of dimension $1$ for any geometric point $s$ on $S$ since $\mathcal{X}_{\kappa(s)}/\kappa(s)$ is a bielliptic surface over a field of characteristic different from $2$ and $3$. 
Let $\Pic_{\mathcal{X}/S}$ be the Picard functor, and $\mathcal{Y} := \Pic^{0}_{\mathcal{X}/S} \hookrightarrow \Pic_{\mathcal{X}/S}$ is the open closed subgroup scheme whose fiber over any geometric point $s$ of $S$ is equal to the identity component of $\Pic_{\mathcal{X}_{s}/\kappa(s)}$ (see \cite[Theorem 9.4.8, Proposition 9.5.20]{Fantechi2005}). 
Therefore, $\mathcal{Y}$ is an abelian scheme over $S$. For the fixed rational point $x\in \mathcal{X}(\so_{K})$, we have the corresponding morphism $\alb \colon \mathcal{X} \rightarrow \Pic^{0}_{\mathcal{Y}/S}$ which sends $x$ to $0$. 
Note that this morphism is flat since one can check it fiberwise (see \cite[Proposition 2.4.2]{Bosch1990}).
Since $\Pic^{0}_{\mathcal{Y}/S}$ is a smooth proper scheme $\so_{K}$-model of the Albanese variety of $X$, the elliptic curve $A$ admits good reduction. 
Moreover, one can show that $\mathcal{Z}:= \mathcal{X}\times_{\Pic^{0}_{\mathcal{Y}/S}}S$ is a smooth proper scheme $\so_{K}$-model of $B$, where $S \rightarrow \Pic^{0}_{\mathcal{Y}/S}$ is the $0$-section. Indeed, by the definition $\mathcal{Z}$ is proper and flat over $S$, and whose geometric fibers are smooth. Therefore $B$ admits good reduction too.

Next, we shall show $(2)\Rightarrow (1)$. Let $\mathcal{A}, \mathcal{B}$ be the N\'{e}ron models of $A$, $B$ (which is the unique smooth proper scheme $\so_{K}$-model of the elliptic curves $A, B$). Moreover, let $\mathcal{G}$ be the N\'{e}ron model of $G$. 
Since $A$ admits good reduction, the scheme $G$ consists of spectra of fields that are unramified over $K$ (recall that $G$ is a quotient of the $n$-torsion points of $A'$, where $A'$ is an elliptic curve which is isogenous to $A$ and $n \in \{ 2,3,4,6,8,9 \}$). 
Therefore a N\'{e}ron model of $G$ is a finite \'{e}tale group scheme over $\so_{K}$. 
By the N\'{e}ron mapping property, one can extend the action of $G$ on $A \times B$ to the action of $\mathcal{G}$ on $\mathcal{A} \times \mathcal{B}$. 
Be Lemma \ref{neronsub}, $\mathcal{G} \hookrightarrow \mathcal{A}$ is a closed immersion of group schemes.
Therefore the above action $\mathcal{G} \times \mathcal{A} \times \mathcal{B} \rightarrow \mathcal{A} \times \mathcal{B}$ is a free action by the locally free group scheme. 
Hence one can take a finite \'{e}tale quotient $\mathcal{A} \times \mathcal{B} \rightarrow \mathcal{X}$, where $\mathcal{X}$ is separated of finite type over $\so_{K}$. 
Therefore one can show that $\mathcal{X}$ is a smooth proper scheme $\so_{K}$-model of $X$, and it finishes the proof.
\end{proof}

\begin{remarksub}
This proposition is one formulation of a good reduction criterion for bielliptic surfaces. By using a $\mu_{m}$-torsor of a bielliptic surface $X$ defined as a cyclic covering defined by $\omega_X$, one can formulate a similar statement for bielliptic surfaces without rational points (see \cite[Section 9]{Chiarellotto2016}).
\end{remarksub}

\begin{theoremsub}\label{shaf}
Let $F$ be a finitely generated field over $\Q$, and $R$ a finite type algebra over $\Z$ which is a normal domain with fraction field $F$. Then, the set
\[
\Shaf :=
\left\{X \left|
\begin{array}{l}
X\colon \textup{bielliptic surface over }F \textup{ which admits a rational point},\\
X \textup{ has good reduction at any height }1 \textup{ prime ideal }\p \in \Spec R
\end{array}
\right.\right\}/F \textup{-isom}
\]
is finite.
\end{theoremsub}

\begin{proof}
Shrinking $\Spec R$, we may assume that $R$ is smooth over $\Z$.
For $X \in \Shaf$, one can associate a rational point $x\in X (F)$ and we have a description $X \simeq (A \times B)/G$ as in Lemma \ref{structure}. 
By Proposition \ref{criteria}, the elliptic curves $A$ and $B$ admit good reduction at any height $1$ prime $\p \in \Spec R$. Therefore there exists a map $\Shaf \rightarrow \Shaf_{\textup{ab}}$, where $\Shaf_{\textup{ab}}$ is the set of pairs of $F$-isomorphism classes of elliptic curves both of which admit good reduction at any height $1$ prime of $\Spec R$. 
By \cite[VI, \S1, Theorem 2]{Faltings1992}, $\Shaf_{\textup{ab}}$ is a finite set. 
Therefore, by Lemma \ref{lem:finitefiber} below, we obtain the desired finiteness.
\end{proof}

\begin{lemmasub}
\label{lem:finitefiber}
Let $F$ be a finitely generated field over $\Q$, and $R$ a smooth domain over $\Z$ with fraction field $F$.
We put $\Shaf$ as in Theorem \ref{shaf}.
Also, we put $\Shaf_{\mathrm{ab}}$ as in the proof of Theorem \ref{shaf}, i.e.
\[
\Shaf_{\mathrm{ab}}
:= 
\left\{
(A,B)\left| 
\begin{array}{l}
A, B \colon \textup{elliptic curves over } F
\textup{ that have good reduction}\\
\textup{ at any height } 1 \textup{ prime ideal } \p \in \Spec R
\end{array}
\right\}
\right.
/F\textup{-isom}.
\]
Let
\[
\Shaf \rightarrow \Shaf_{\mathrm{ab}}, \quad X \mapsto (A,B)
\]
be the morphism defined by choosing a rational point $x \in X(F)$ and using Lemma \ref{structure} and Proposition \ref{criteria} (see the proof of Theorem \ref{shaf}).
Then this morphism is finite-to-one.
\end{lemmasub}

\begin{proof}
For fixed $A \times B \in \Shaf_{\textup{ab}}$, let $X\in \Shaf$ be a bielliptic surface lying in its fiber.
Then, $X$ is isomorphic to $(A \times B) /G_{X}$ for some finite \'{e}tale subgroup scheme
$G_{X} \hookrightarrow A$ and some embedding $\alpha_{X} \colon G_{X} \hookrightarrow \Aut_{B/F}$.
Since the order of $G_{X}$ is bounded, the candidates of $G_{X}\hookrightarrow A$ are finitely many.
Hence we may fix an embedding $\iota \colon G := G_{X} \hookrightarrow A$.
Let $S_{A, B, \iota}$ be the set of embeddings $\alpha \colon G \hookrightarrow \Aut_{B/F}$ such that $(A\times B) / G$ is a bielliptic surface, where the quotient is taken with respect to the action $\iota \times \alpha$.
Let $\sim$ be the equivalence relation on $S_{A,B, \iota}$ given by $B(F)$-conjugate.
More precisely, for $\alpha, \alpha' \in \Aut_{B/F}(G)$, $\alpha \sim \alpha'$ if and only if there exists $b \in B(F)$ such that $t_{b} \alpha t_{-b} = \alpha'$, where $t_{b} \colon B \rightarrow B$ is the translation which sends $0$ to $b$. 
In this case, we have the $F$-isomorphism
\[
((A \times B)/G)^{(\iota \times \alpha)} \simeq ((A \times B)/G)^{(\iota \times \alpha')}
\]
induced by $\id \times t_{b}$, where the left-hand (resp.\ right-hand) side is the quotient with respect to the action $\iota \times \alpha$ (resp.\ $\iota \times \alpha'$). 
Therefore, it suffices to show the finiteness of $S_{A,B,\iota}/\sim$.
Since $(B/G)_{\overline{F}}^{(\alpha_{\overline{F}})}$ (the quotient with respect to the action $\alpha_{\overline{F}}$) is $\mathbb{P}^{1}_{\overline{F}}$, the image $H$ of $\alpha_{\overline{F}} (G(\overline{F}))$ in $\Aut(B_{\overline{F}})/B(\overline{F}) \simeq \Aut(B_{\overline{F}},0)$ is non-trivial. Indeed, if this is trivial, then the quotient is given by an isogenous elliptic curve, and we have a contradiction.
Take an element $g = g_{\alpha} \in G(\overline{F})$ such that the image of $\alpha_{\overline{F}} (g)$ generates $H$. 
Let $b_{\alpha} \in B(\overline{F})$ be a fixed point of $\alpha_{\overline{F}} (g)$. 
Since  $\alpha_{\overline{F}}(G(\overline{F}))$ is commutative, we have a decomposition $G(\overline{F}) \simeq H_{1}^{\alpha} \times H_{2}^{\alpha}$, with
$\alpha (H_{1}^{\alpha}) \subset \Aut (B_{\overline{F}},b_{\alpha})$ 
and $\alpha(H_{2}^{\alpha}) \subset B(\overline{F}) \subset \Aut (B_{\overline{F}})$ (see the arguments in \cite[Subsection 10.26, List 10.27]{Badescu2001}).
We denote the projection $G(\overline{F}) \rightarrow H_{i}^{\alpha}$ by $p_{i}^{\alpha}$.
Since $\Aut (B_{\overline{F}}, b_{\alpha})$ is $\Z/2\Z$, $\Z/4\Z$, or $\Z/6\Z$, there are classification of $H_{1}^{\alpha} \hookrightarrow \Aut (B_{\overline{F}}, b_{\alpha})$ and $H_{2}^{\alpha} \hookrightarrow B(\overline{F})$ (see \cite[List 10.27]{Badescu2001}). In particular, we remark that if $H_{2}^{\alpha}$ is non-trivial then 
$H_{2}^\alpha$ is $\Z/2\Z$ (the case of $(a_{2})$ or $(c_{2})$ in \cite[List 10.27]{Badescu2001}) or $\Z/3\Z$ (the case of $(c_{1})$ in \cite[List 10.27]{Badescu2001}).
In both cases, we note that $\# H_{2}^{\alpha}$ divides $\# H_{1}^{\alpha}$.

We can define the morphism of sets 
\[
\phi \colon S_{A,B,\iota} \rightarrow B(F) ; \alpha \mapsto \sum_{g \in G(\overline{F})}\alpha_{\overline{F}}(g) (0).
\]
Since $\# H_{2}^{\alpha} | \# H_{1}^{\alpha}$, we have
\[
\sum_{g\in G(\overline{F})} \alpha_{\overline{F}}(p_{2}^{\alpha}(g))(0) = 
\sum_{h \in H_{2}^{\alpha}} \# H_{1}^{\alpha} \cdot \alpha_{\overline{F}}(h)(0) =0.
\]
Let $n$ be the order of $G(\overline{F})$, and 
$h^{\alpha,g} \in \Aut(B_{\overline{F}},0)$ the automorphism such that
\[
\alpha_{\overline{F}}(p_{1}^{\alpha}(g)) = t_{b_{\alpha}} h^{\alpha,g} t_{b_{\alpha}}^{-1}.
\]
Since
\[
h^{\alpha,g_{0}}(\sum_{g\in G(\overline{F})} h^{\alpha,g}) =\sum_{g\in G(\overline{F})} h^{\alpha,g}
\]
 for non-trivial $h^{\alpha,g_{0}}$ (one can take $g_{0} = g_{\alpha}$),
 the image $(\sum_{g\in G(\overline{F})} h^{\alpha,g}) (B_{\overline{F}})$ is contained in the fixed locus of  $h^{\alpha, g_{0}}$.
Since $(\sum_{g\in G(\overline{F})} h^{\alpha,g}) (B_{\overline{F}})$ is connected and containing $0$, we have $\sum_{g\in G(\overline{F})} h^{\alpha,g} = 0 \in \End (B_{\overline{F}})$.
Combining with
\[
\alpha_{\overline{F}}(g)(0) = \alpha_{\overline{F}}(p_{1}^{\alpha}(g))(0) + \alpha_{\overline{F}}(p_{2}^{\alpha}(g))(0),
\] 
we have 
\begin{align*}
\phi (\alpha) &= \sum_{g\in G(\overline{F})} \alpha_{\overline{F}}(p_{1}^{\alpha}(g))(0) \\
&= \sum_{g\in G(\overline{F})} ( h^{\alpha,g} (-b_{\alpha}) + b_{\alpha})\\
&= \sum_{g\in G(\overline{F})} (1 - h^{\alpha,g}) (b_{\alpha})\\
&= n b_{\alpha}.
\end{align*}
Since we fix the isomorphism class of $G$, 
possible isomorphism classes $H_{1}^{\alpha}, H_{2}^{\alpha}$ are at most finitely many (in fact unique).
Therefore, possible $\alpha |_{H_{2}^{\alpha}}$ are at most finitely many.
Moreover, if $n b_{\alpha}$ is fixed, then possible $b_{\alpha}$ are finitely many, and $\alpha |_{H_{1}^{\alpha}}$ are at most finitely many since possible embeddings $H_{1}^{\alpha} \rightarrow \Aut(B_{\overline{F}}, b_{\alpha})$ are finitely many.
Hence each fiber of $\phi$ is finite.
We note that $\phi (t_{b} \alpha t_{b}^{-1}) = n(b_{\alpha}+b)$.
Therefore $\phi$ induces the morphism with finite fibers
\[
S_{A,B,\iota}/ \sim \rightarrow B(F)/ n B(F).
\]
By \cite{Neron1952} (see also \cite[Section 1]{Lang1959}), the abelian group $B(F)$ is finitely generated, so the right-hand side is finite. It finishes the proof.
\end{proof}

In Theorem \ref{shaf}, we only consider bielliptic surfaces admitting rational points.
We construct an example below to show that this assumption is an essential one.
This example is based on a counterexample to the Shafarevich conjecture for genus 1 curves in \cite[footnote 27]{Mazur}.

\begin{propsub}
\label{prop:withoutsection}
There exists a finite set of finite prime numbers $S$ such that the set 
\[
\Shaf' :=
\left\{X \left|
\begin{array}{l}
X\colon \textup{bielliptic surface over }\Q,\\
X \textup{ has good reduction at any} \textup{ prime number }p \notin S
\end{array}
\right.\right\}/F \textup{-isom}
\]
is an infinite set.
\end{propsub}

\begin{proof}
We take a finite set of finite prime numbers $S$ and an elliptic curve $E$ over $\Q$ satisfying the following:
\begin{enumerate}
\item
$2 \notin S,$
\item
$E$ has good reduction outside $S$,
\item
The Mordell-Weil rank of $E$ is 0, and the analytic rank of $E$ is 0.
\item
Any point of $E[2]$ is a $\Q$-rational point.
\end{enumerate}
Note that the assumption $(2)$ ensures that $S$ is non-empty by \cite{Ogg}.
Moreover, by Kolyvagin's result \cite{Kolyvagin1988} (see also \cite[Theorem 1]{Kolyvagin1991}) and the assumption (3),
the Tate-Shafarevich group $\Sha (E)$ of $E$ is known to be finite.
We can take, for example, $S$ as $\{3,5 \}$ and $E$ as the elliptic curve over $\Q$ with Cremona label $15a2$.
We shall show that $\Shaf'$ is an infinite set for this $S$.

By Tate's argument (\cite[footnote 27]{Mazur}) and the assumptions (2) and (3), we can take infinitely many isomorphism classes of $E$-torsors $C_{i}$ over $\Q$ ($i=1,2, \ldots$) such that $C_{i, \Q_{p}}$ is a trivial torsor for any $p \notin S$.
Since $\Sha(E)$ is finite, we may assume that there exists a prime number $p \in S$ such that 
the isomorphism classes of $E_{\Q_{p}}$-torsor $C_{i, \Q_{p}}$ are all distinct.
By restricting the action of the torsor structure, we have a free action $\sigma_{1}$ of $E[2]$ on $C_{i}$.
We fix a $\Z/2\Z$-basis $P, Q$ of $E[2] \simeq (\Z/2\Z)^{\oplus 2}$, that are $\Q$-rational points by the assumption (4).
We consider the morphism
\[
\sigma_{2} \colon E[2] \times E \rightarrow E; (aP+bQ, R) \mapsto (-1)^a R + bQ,
\]
which is well-defined and gives the action of $E[2]$ on $E$.
We put
\[
X_{i} := (C_{i} \times E)/ E[2],
\]
where the quotient is taken with respect to the action $\sigma := \sigma_{1} \times \sigma_{2}$.
Clearly, $X_{i}$ is a bielliptic surface over $\Q$.
Moreover, $X_{i}$ has good reduction outside $S$ by the same argument as in the proof of Proposition \ref{criteria}.
We shall show that $X_{i}$ $(i=1,2, \ldots)$ represent infinitely many $\Q$-isomorphism classes.
Let $A_{i}$ be the Albanese torsor $\Alb (X_{i})$ of $X_{i}$.
It suffices to show that $A_{i}$ $(i=1,2, \ldots)$ represent infinitely many $\Q$-isomorphism classes.
Let  $f_{i} \colon X_{i} \rightarrow C_{i}/ E[2]$  be the natural morphism.
Fix a $\overline{\Q}$-rational point of $C_{i, \overline{\Q}}$ so that $C_{i, \overline{\Q}}$ is equipped with the structure of an elliptic curve, which is isomorphic to $E_{\overline{\Q}}$.
Since 
\[
(E_{\overline{\Q}} \times E_{\overline{\Q}})/E[2]  \simeq 
 X_{i, \overline{\Q}} = (C_{i, \overline{\Q}} \times E_{\overline{\Q}}) /E[2] 
\rightarrow
  C_{i, \overline{\Q}} / E[2] \simeq E_{\overline{\Q}} / E[2] \rightarrow E_{\overline{\Q}}
 \] 
 is an Albanese morphism by the proof of Lemma \ref{structure}, we have
 $A_{i} \simeq C_{i}/E[2]$.
Therefore, $A_{i}$ admits $E/E[2] \simeq E$-torsor structure, whose class in Weil–Ch\^{a}telet group $H^1(\Gal(\overline{\Q}/\Q), E)$ is $2[C_{i}]$.
Here, $C_{i}$ is a class represented by the $E$-torsor $C_{i}$.
We recall that $[C_{i, \Q_{p}}] \in H^1 (\Gal (\overline{\Q_{p}},\Q_{p}), E_{\Q_{p}})$ is a non-trivial class.
Moreover, since $p \neq 2$ by the assumption (1),
the group $H^1 (\Gal (\overline{\Q_{p}},\Q_{p}), E_{\Q_{p}}) [2] \simeq E(\Q_{p})/ 2 E(\Q_{p})$ is a finite group.
Since the classes $[C_{i,\Q_{p}}] \in H^1 (\Gal (\overline{\Q_{p}},\Q_{p}), E_{\Q_{p}})$
are all distinct, $\{2[C_{i,\Q_{p}}] \} \subset H^1 (\Gal (\overline{\Q_{p}},\Q_{p}), E_{\Q_{p}})$ is an infinite set, and it finishes the proof.
\end{proof}

\section{On the N\'{e}ron model of a bielliptic surface}\label{sectionneron}
In this section, we prove the existence of a N\'{e}ron model of a bielliptic surface under certain conditions. First, we recall the notion of a weak N\'{e}ron model.
\begin{definitionsub}\label{Neron}
Let $K$ be a discrete valuation field, $X_{K}$ be a smooth separated finite type scheme over $K$, and $\mathcal{X}$ and $\mathcal{X}_{i}$ $(i=0, \ldots, s)$ smooth $\so_{K}$-models of $X$. 
The family of $\so_K$-models $\{\mathcal{X}_{i}\}_i$ is a \emph{weak N\'{e}ron model} of $X$ if each $K^{\sh}$-valued point of $X$ extends to an $\so_{K^{\sh}}$-valued point of at least one of $\mathcal{X}_{i}$.
We say the $\so_K$-model $\mathcal{X}$ is a \emph{weak N\'{e}ron model} of $X$ if $\{\mathcal{X}\}$ is a weak N\'{e}ron model of $X$ in the sense defined above.
\end{definitionsub}

A weak N\'{e}ron model satisfies the following useful extension property.

\begin{propsub}\label{weakneronproperty}
Let $K$ be a discrete valuation field, and $X$ be a smooth separated $K$-scheme of finite type.
\begin{enumerate}
\item
Let $\mathcal{X}$ be a weak N\'{e}ron model of $X$. Then for any smooth $\so_{K}$-scheme $Z$ and for any $K$-rational map $u_{K} \colon Z_{K} \dashrightarrow X$, there exists an extension of $u_{K}$ to an $\so_{K}$-rational map $Z \dashrightarrow \mathcal{X}$, i.e., there exists an open subscheme $U \subset Z$ and an $\so_{K}$-morphism $u \colon U \rightarrow \mathcal{X}$ which is an extension of $u_{K}$ such that $U_{\kappa(s)}$ is open dense in $Z_{\kappa(s)}$ for any $s \in \Spec \so_{K}$.
\item
Let $\{\mathcal{X}_{i}\}_{0\leq i \leq s}$ be a weak N\'{e}ron model of $X$. Then for any smooth $\so_{K}$-scheme $Z$ with irreducible special fiber and for any $K$-rational map $Z_{K} \dashrightarrow X$, there exists an integer $i$ with $0 \leq i \leq s$ such that $u_{K}$ extends to an $\so_{K}$-rational map $Z \dashrightarrow \mathcal{X}_{i}$.
\end{enumerate}
\end{propsub}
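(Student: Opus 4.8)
The plan is to reduce both statements to a local question at the generic points of the special fiber $Z_s$, and there to exploit the defining property of a weak N\'eron model, which controls $\oo_{K^{\sh}}$-valued points. First I would reduce to the case where $u_{K}$ is an honest morphism, by restricting to a dense open of $Z_{K}$ on which the rational map is defined; this open contains the generic point of $Z$, since $Z$ is flat and hence $Z_{K}$ is dense in $Z$. The target is to produce an open $U\subset Z$ with $U_{s}$ dense in $Z_{s}$ on which $u_{K}$ extends to a morphism to $\mathcal{X}$. Because a dense open of a (locally Noetherian) scheme contains every generic point of every irreducible component, it suffices to extend $u_{K}$ to a Zariski neighbourhood of each generic point $\eta$ of $Z_{s}$. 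Now $Z$ is smooth, hence regular, over $\oo_{K}$, and $\eta$ has codimension $1$, so $\oo_{Z,\eta}$ is a discrete valuation ring; moreover $Z_{s}$ is reduced by smoothness, so a uniformizer $\pi$ of $\oo_{K}$ remains a uniformizer of $\oo_{Z,\eta}$. Thus $\oo_{Z,\eta}$ is an \emph{unramified} extension of $\oo_{K}$, with (generally transcendental) residue field $\kappa(\eta)$, and $u_{K}$ furnishes a point $\Spec\Frac(\oo_{Z,\eta})\to X$ arising from the generic point of $Z$.

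The core step is to extend this point to a morphism $\Spec\oo_{Z,\eta}\to\mathcal{X}$. I would pass to the strict henselization $R\coloneqq\oo_{Z,\eta}^{\sh}$, a strictly henselian discrete valuation ring, still unramified over $\oo_{K}$, with separably closed residue field $\kappa(\eta)^{\sep}$; extending over $R$ suffices, since $R$ is a filtered colimit of \'etale neighbourhoods of $\eta$ and $\mathcal{X}$ is of finite type, so any $R$-extension descends to an \'etale, hence open, neighbourhood, where it is unique because $\mathcal{X}$ is separated. The key technical input is that the weak N\'eron property, although phrased only for $\oo_{K^{\sh}}$-points, in fact propagates to every such unramified strictly henselian discrete valuation ring: every $\Frac(R)$-point of $X$ extends to an $R$-point of $\mathcal{X}$. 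I would prove this by approximation. Base-changing to $\oo_{K^{\sh}}$ is harmless, as the density of $U_{s}$ may be tested after the faithfully flat map $\oo_{K}\to\oo_{K^{\sh}}$ and the extension descends by \'etale descent and separatedness, so we may assume $\oo_{K}=\oo_{K^{\sh}}$ is strictly henselian. Writing $R$ as a filtered colimit of smooth $\oo_{K^{\sh}}$-algebras and using that $X$ and $\mathcal{X}$ are of finite type, the given $\Frac(R)$-point descends to a rational map $W\dashrightarrow X$ from a smooth finite-type $\oo_{K^{\sh}}$-scheme $W$. Since $\oo_{K^{\sh}}$ is strictly henselian, $W$ carries an abundance of sections $\Spec\oo_{K^{\sh}}\to W$ through the $k^{\sep}$-rational points of $W_{s}$, which are dense; for those sections whose generic point lies in the domain of definition, the resulting $K^{\sh}$-point of $X$ extends, by the weak N\'eron property, to an $\oo_{K^{\sh}}$-point of $\mathcal{X}$. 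A spreading-out and constructibility argument, passing from extension along a dense family of arcs to extension on a dense open and using that the graph of $W\dashrightarrow\mathcal{X}$ projects isomorphically onto $W$ over the generic fibre, then yields extension over the generic point in question.

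The main obstacle is exactly this last transfer: the residue field $\kappa(\eta)$ is transcendental over $k$, so no $\oo_{K^{\sh}}$-section meets $\eta$ directly, and one must bridge the gap between extendability along sections through a dense set of closed points and extendability on a Zariski neighbourhood of the generic point $\eta$. This is where the finite-type hypotheses on $X$ and $\mathcal{X}$ and the smoothness of $Z$ are essential, and it is the technical heart of the weak N\'eron mapping property (compare the smoothening techniques of \cite[Chapter 3]{Bosch1990}). Granting it, the neighbourhoods obtained at the generic points of $Z_{s}$ assemble, together with $Z_{K}$, into the desired open $U$ with $U_{s}$ dense, proving $(1)$.

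Finally, $(2)$ follows from the same analysis. When $Z_{s}$ is irreducible it has a unique generic point $\eta$, giving a single unramified strictly henselian discrete valuation ring $R$ and a single $\Frac(R)$-point of $X$. The family version of the weak N\'eron property provides an index $i$ such that this point extends to an $R$-point of $\mathcal{X}_{i}$; since there is only one generic point to accommodate, this one index $i$ suffices, and the extension spreads out to an $\oo_{K}$-rational map $Z\dashrightarrow\mathcal{X}_{i}$ as before.
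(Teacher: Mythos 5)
The paper does not actually prove this proposition; it simply cites \cite[Proposition 3.5.3]{Bosch1990}. So any self-contained argument is already a different route, and yours does follow the skeleton of the standard one: reduce to the generic points $\eta$ of the special fibre $Z_{s}$, where $\oo_{Z,\eta}$ is a discrete valuation ring in which a uniformizer of $\oo_{K}$ stays a uniformizer, and propagate the lifting property from $\oo_{K^{\sh}}$-points to these local rings using the density of sections of $Z_{\oo_{K^{\sh}}}$ through closed points of its special fibre. However, the step you yourself flag as ``the technical heart'' is precisely the content of the proposition, and your sketch of it does not close. Knowing that $u_{K}$ extends along each member of a dense family of $\oo_{K^{\sh}}$-sections is a statement about the \emph{image} of a projection; by itself it does not produce a \emph{morphism} on a neighbourhood of $\eta$, and ``a spreading-out and constructibility argument'' is not a proof of that implication.

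The missing mechanism is the following. Let $\overline{\Gamma}$ be the schematic closure in $Z\times_{\oo_{K}}\mathcal{X}$ of the graph of $u_{K}$ and $p\colon\overline{\Gamma}\rightarrow Z$ the projection, which is birational onto the (assumed irreducible) $Z$. Each section $\widetilde{z}\in Z(\oo_{K^{\sh}})$ whose generic point lies in the domain of definition, paired with the weak N\'{e}ron lift of $u_{K}(\widetilde{z}_{K})$, factors through $\overline{\Gamma}$ because $\Spec \oo_{K^{\sh}}$ is integral and its generic point lands in the graph; hence $p(\overline{\Gamma})$ contains a dense set of closed points of the special fibre. (One must also check that such sections exist in abundance: the closure of the complement of the domain of definition is $\oo_{K}$-flat of smaller fibre dimension, hence meets $Z_{s}$ in a nowhere dense set.) Chevalley's theorem then shows $p(\overline{\Gamma})$ contains the generic points $\eta$ of $Z_{s}$. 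Finally---and this is the point your proposal omits---for $\gamma\in p^{-1}(\eta)$ the local ring $\oo_{\overline{\Gamma},\gamma}$ is a local subring of the function field of $Z$ dominating the discrete valuation ring $\oo_{Z,\eta}$ and distinct from the function field itself; maximality of discrete valuation rings under domination forces $\oo_{Z,\eta}\rightarrow\oo_{\overline{\Gamma},\gamma}$ to be an isomorphism, so $p$ is a local isomorphism at $\gamma$ and $u_{K}$ extends to a morphism near $\eta$. With that supplied (or with the proof delegated to \cite[Proposition 3.5.3]{Bosch1990}, as the paper does), both parts follow; your reduction of part (2) to the single generic point of the irreducible $Z_{s}$ is correct.
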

\begin{proof}
See \cite[Proposition 3.5.3]{Bosch1990}.
\end{proof}

\begin{lemmasub}\label{finet}
Let $K$ be a discrete valuation field, $X$ a smooth separated $K$-scheme of finite type, and $\mathcal{X}$ the N\'{e}ron model of $X$.
Let $\mathcal{Y}$ be a smooth separated finite type scheme over $\so_{K}$.
Let $f \colon \mathcal{X} \rightarrow \mathcal{Y}$ be a finite \'{e}tale morphism 
over $\so_{K}$. Assume that $\mathcal{Y}$ is a weak N\'{e}ron model of its generic fiber $\mathcal{Y}_{K}$. 
Then $\mathcal{Y}$ is the N\'{e}ron model of $\mathcal{Y}_{K}$.
\end{lemmasub}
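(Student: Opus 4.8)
The plan is to verify the N\'eron mapping property of $\mathcal{Y}$ directly. Uniqueness of extensions is automatic from separatedness (Remark \ref{remneron}), so only existence requires work. Fix a smooth $\oo_{K}$-scheme $Z$ and a $K$-morphism $u_{K}\colon Z_{K}\to \mathcal{Y}_{K}$; the goal is to extend it to $u\colon Z\to \mathcal{Y}$. My strategy is to pull back the finite \'etale cover $f$ along $u_{K}$, extend the resulting cover over all of $Z$ by purity of the branch locus, lift it to $\mathcal{X}$ using the N\'eron property of $\mathcal{X}$, and then descend back down to $Z$.

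First I would apply the weak N\'eron mapping property (Proposition \ref{weakneronproperty}(1)), which is available since $\mathcal{Y}$ is assumed to be a weak N\'eron model of $\mathcal{Y}_{K}$. It produces an open $U\subset Z$ and an extension $u'\colon U\to \mathcal{Y}$ of $u_{K}$ such that $U_{s}$ is dense in $Z_{s}$ for every $s\in \Spec\oo_{K}$. Since $u_{K}$ is already a morphism on the whole of $Z_{K}$ we have $U_{K}=Z_{K}$, so $Z\setminus U$ is contained in the special fibre and, being of dimension strictly smaller than $\dim Z_{s}$ there, has codimension $\geq 2$ in the regular scheme $Z$. Now form the finite \'etale cover $\tilde{U}:=U\times_{u',\mathcal{Y},f}\mathcal{X}\to U$. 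By Zariski--Nagata purity, restriction gives an equivalence between finite \'etale covers of $Z$ and of $U$, so $\tilde{U}\to U$ extends uniquely to a finite \'etale cover $p\colon \tilde{Z}\to Z$; in particular $\tilde{Z}$ is smooth over $\oo_{K}$ and its generic fibre is $\tilde{Z}_{K}=Z_{K}\times_{\mathcal{Y}_{K}}X$.

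Next I would use that $\mathcal{X}$ is the N\'eron model of $X$. The second projection $\tilde{Z}_{K}\to X$ is a $K$-morphism out of the generic fibre of the smooth $\oo_{K}$-scheme $\tilde{Z}$, hence extends uniquely to $v\colon \tilde{Z}\to \mathcal{X}$. Set $g:=f\circ v\colon \tilde{Z}\to \mathcal{Y}$. On generic fibres the defining square of the fibre product gives $g_{K}=u_{K}\circ p_{K}$, so $g_{K}$ is constant along the fibres of $p_{K}$.

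Finally I would descend $g$ along the finite \'etale (hence fppf) cover $p$. The two pullbacks of $g$ to $\tilde{Z}\times_{Z}\tilde{Z}$ agree on the generic fibre, since $g_{K}=u_{K}\circ p_{K}$ and the two projections become equal after composing with $p_{K}$; as $\tilde{Z}\times_{Z}\tilde{Z}$ is $\oo_{K}$-flat its generic fibre is schematically dense, and $\mathcal{Y}$ is separated, so the two pullbacks coincide everywhere. By fppf descent of morphisms into the scheme $\mathcal{Y}$ we obtain $u\colon Z\to \mathcal{Y}$ with $u\circ p=g$, and $u$ restricts to $u_{K}$ because $p_{K}$ is an epimorphism. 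This is the required extension, completing the verification. The main obstacle is the passage across the special fibre: the pulled-back cover is a priori defined only over $Z_{K}$, a codimension-one locus where purity does not apply, and the precise role of the weak N\'eron hypothesis is to enlarge the domain of $u_{K}$ to an open whose complement has codimension $\geq 2$, so that purity can be invoked.
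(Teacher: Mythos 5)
Your proof is correct and follows essentially the same route as the paper: extend $u_{K}$ over an open $U$ with complement of codimension $\geq 2$ via the weak N\'eron property, pull back $f$ and extend the cover by Zariski--Nagata purity, lift over $\widetilde{Z}$ using the N\'eron property of $\mathcal{X}$, and descend. The only cosmetic difference is that you phrase the final descent as fppf descent with the cocycle condition checked via schematic density of the generic fibre, whereas the paper writes out the equivalent equalizer diagram explicitly.
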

\begin{proof}
Let $Z$ be a smooth $\so_{K}$-scheme, and $u_{K} \colon Z_{K} \rightarrow \mathcal{Y}_{K}$ a $K$-morphism. 
We shall extend the domain of $u_{K}$ to $Z$.
By Proposition \ref{weakneronproperty}, there exists an open subscheme $U \subset Z$ and $u \colon U \rightarrow \mathcal{Y}$, such that $U$ contains the generic fiber $Z_{K}$ and all the generic points of the special fiber, and $u$ is the extension of $u_{K}$. Using $u$, one has a finite \'{e}tale covering $\widetilde{U}:= U \times_{\mathcal{Y}}\mathcal{X} \rightarrow U$. 
Since $Z$ is a regular scheme, by taking the normalization of $Z$ in $\widetilde{U}$ and using the Zariski-Nagata purity theorem, one can extend this finite \'{e}tale covering to $\widetilde{Z} \rightarrow Z$. 
By the N\'{e}ron mapping property of $\mathcal{X}$, we have $\mathcal{X}(\widetilde{U})=\mathcal{X}(\widetilde{U}_{K})= \mathcal{X}(\widetilde{Z})$. 
Therefore, the base change morphism $\widetilde{u} \colon \widetilde{U} \rightarrow \mathcal{X}$ can be extended to $\widetilde{u}\colon \widetilde{Z} \rightarrow \mathcal{X}$. 
Then we have a morphism $u_{\widetilde{Z}} \colon \widetilde{Z} \rightarrow \mathcal{Y}$. 
Consider the equalizer diagram
\begin{equation}
\vcenter{
\xymatrix{
\mathcal{Y}(Z) \ar[r] \ar[d]
& \mathcal{Y}(\widetilde{Z}) \ar@<0.5ex>[r] \ar@<-0.5ex>[r] \ar[d]
& \mathcal{Y}(\widetilde{Z}\times_{Z}\widetilde{Z}) \ar[d] \\
\mathcal{Y}(Z_{K}) \ar[r]
&\mathcal{Y}(\widetilde{Z}_{K}) \ar@<0.5ex>[r] \ar@<-0.5ex>[r]
&\mathcal{Y}((\widetilde{Z}\times_{Z}\widetilde{Z})_{K})
}
},
\end{equation}
where the vertical arrows are injective since $Z$, $\widetilde{Z}$, and $\widetilde{Z} \times_{Z}\widetilde{Z}$ are reduced schemes which are flat over $\so_{K}$ and $\mathcal{Y}$ is separated over $\so_{K}$.
It suffices to show that $u_{\widetilde{Z}} \in \mathcal{Y}(\widetilde{Z})$ is contained in the equalizer of the upper right arrows.
Since the middle vertical map sends $u_{\widetilde{Z}}$ to $u_{K}$, which is contained in the equalizer of the lower right arrows, it finishes the proof.
\end{proof}

\begin{remarksub}
Without the assumption that $\mathcal{Y}$ is a weak N\'{e}ron model, Lemma \ref{finet} does not hold.
For example, consider an elliptic curve $E$ over a discrete valuation field $K$ whose $\ell$-torsion points are $K$-rational. 
Here, we fix a prime number $\ell$ that is different from the residual characteristic of $K$.
Assume that the residue field of $K$ is algebraically closed, and the special fiber of minimal regular model of $E$ is of reduction type $I_n$ (i.e. non-singular rational curves arranged in the sphe of an $n$-gon) with $\ell | n$.
Let $\mathcal{E}$ and $\mathcal{G}$ be N\'{e}ron models of $E$ and $G:= E[\ell]$ with a closed immersion $\mathcal{G} \hookrightarrow \mathcal{E}$ given by Lemma \ref{neronsub}.
Then $\mathcal{E} \rightarrow \mathcal{E}/\mathcal{G}$ is a finite \'{e}tale morphism over $\so_K$, but $\mathcal{E}/\mathcal{G}$ is not a N\'{e}ron model of $E/G$.
Suppose by contradiction that $\mathcal{E}/\mathcal{G}$ is a N\'{e}ron model of $E/G$.
Then the isomorphism $E/G \simeq E$ induced by $\times \ell$ extends to an isomorphism
$\mathcal{E}/\mathcal{G} \simeq \mathcal{E}$.
Since the composition $E \rightarrow E/G \simeq E$ is the $\ell$-multiplication, so is the composition
$\mathcal{E} \rightarrow \mathcal{E}/\mathcal{G} \simeq \mathcal{E}$.
Since the later composition is not surjective by $\ell|n$, we obtain the contradiction.
\end{remarksub}

We obtain the existence of a N\'{e}ron model for biellptic surfaces admitting good reduction.
Note that this can also be proved using \cite[Proposition 6.2]{GLL}.

\begin{propsub}\label{neron}
Let $K$ be a discrete valuation field with residual characteristic different from $2$ and $3$, and $X$ a bielliptic surface over $K$. Assume that $X$ admits good reduction. Then $X$ admits a N\'{e}ron model.
\end{propsub}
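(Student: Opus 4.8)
The plan is to show that the smooth proper scheme model $\mathcal{X}$ over $\oo_{K}$ produced by Lemma \ref{unique} is itself the N\'{e}ron model of $X$. The mechanism is to recognize $\mathcal{X}$ as a finite \'{e}tale quotient of a genuine N\'{e}ron model (that of an abelian surface, or a torsor under one) and then to invoke Lemma \ref{finet}, which propagates the N\'{e}ron mapping property across such quotients. Note that no section on $X$ is assumed here, so I would not route the argument through Lemma \ref{structure} or Proposition \ref{criteria}; instead I would reuse the section-free construction already carried out inside the proof of Lemma \ref{unique}.

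First I would recall that construction. Since $X$ admits good reduction, the proof of Lemma \ref{unique} produces a smooth proper scheme model $\mathcal{X}$ over $\oo_{K}$ of the form $\mathcal{X} \simeq \mathcal{Y}_{0}/\mu_{m}$, where $m$ is the order of $\omega_{X/K}$, the scheme $\mathcal{Y}_{0}$ is the smooth proper N\'{e}ron model over $\oo_{K}$ of the cyclic covering $Y_{0}$ of $X$ attached to a trivialization of $\omega_{X/K}^{\otimes m}$, and $Y_{0}$ is geometrically an abelian surface. Because the $\mu_{m}$-action on $\mathcal{Y}_{0}$ is free, the quotient map $f \colon \mathcal{Y}_{0} \rightarrow \mathcal{X}$ is finite \'{e}tale. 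We are therefore exactly in the situation of Lemma \ref{finet}, with $\mathcal{Y}_{0}$ playing the role of the N\'{e}ron model and $\mathcal{X}$ playing the role of the target. (When $X$ happens to carry a section, one could alternatively take $\mathcal{Y}_{0} = \mathcal{A} \times \mathcal{B}$ and $f$ the finite \'{e}tale quotient by $\mathcal{G} \hookrightarrow \mathcal{A}$ from Proposition \ref{criteria} and Lemma \ref{neronsub}; the cyclic-covering version is what makes the argument work in general.)

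To apply Lemma \ref{finet} it remains to verify that $\mathcal{X}$ is a weak N\'{e}ron model of $X = \mathcal{X}_{K}$, and this is automatic for a smooth proper model. Given any $K^{\sh}$-valued point of $X$, the valuative criterion of properness applied to $\mathcal{X} \rightarrow \Spec \oo_{K}$ yields a unique extension to an $\oo_{K^{\sh}}$-valued point of $\mathcal{X}$, which necessarily lands in the smooth locus since $\mathcal{X}$ is smooth over $\oo_{K}$. Hence $\mathcal{X}$ is a weak N\'{e}ron model, and Lemma \ref{finet} then shows that $\mathcal{X}$ is the N\'{e}ron model of $X$, as desired.

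I expect that most of the real work has already been discharged by the earlier results, so the proof itself should be short: Lemma \ref{unique} supplies the finite \'{e}tale cover whose source is genuinely a N\'{e}ron model, and Lemma \ref{finet} does the transport of the N\'{e}ron property. The one place I would be careful is in matching the hypotheses of Lemma \ref{finet} correctly, namely in asserting that $\mathcal{Y}_{0}$ itself (and not merely $\mathcal{X}$) is the N\'{e}ron model over $\oo_{K}$. This is where the only delicate input lies, the descent from $\widehat{\oo_{K}}$ to $\oo_{K}$, which is handled in the proof of Lemma \ref{unique} via \cite[Theorem 7.2.1]{Bosch1990}; the remaining verification, the weak N\'{e}ron property of the smooth proper model, I do not anticipate presenting any obstacle.
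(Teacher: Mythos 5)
Your proposal is correct and follows essentially the same route as the paper: both take the smooth proper scheme model $\mathcal{X}$ from Lemma \ref{unique}, observe that the cyclic $\mu_{m}$-covering of $\mathcal{X}$ over $\oo_{K}$ is the N\'{e}ron model of its generic fiber (as established inside the proof of Lemma \ref{unique}), note that $\mathcal{X}$ is a weak N\'{e}ron model by the valuative criterion of properness, and conclude via Lemma \ref{finet}. Your extra care about the descent from $\widehat{\oo_{K}}$ to $\oo_{K}$ and about not assuming a section matches exactly where the paper's argument places the real work.
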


\begin{proof}
Let $\mathcal{X}$ be a proper smooth scheme in Lemma \ref{unique}. Then the cyclic covering over $\so_{K}$ associated with a fixed isomorphism $\omega_{\mathcal{X}/\so_{K}}^{\otimes m} \simeq \so_{\mathcal{X}}$ is the N\'{e}ron model of its generic fiber as in the proof in Lemma \ref{unique}. 
By the valuative criterion of properness, the model $\mathcal{X}$ is a weak N\'{e}ron model of $X$, so by Lemma \ref{finet}, it finishes the proof.
\end{proof}

Next, we state the existence of N\'{e}ron models in a little more general setting.
Let $K$ be a discrete valuation field with residue characteristic different from $2$ and $3$, and $X$ a bielliptic surface over $K$ which admits a rational point $x\in X(K)$. Let $X \simeq (A\times B) /G$ be the isomorphism given in Lemma \ref{structure}.
Let $\mathcal{A}$, $\mathcal{B}$ and $\mathcal{G}$ be N\'{e}ron models of $A$, $B$ and $G$.
Let $\overline{\mathcal{A}}$, $\overline{\mathcal{B}}$ be minimal regular models of $A$, $B$. Note that the N\'{e}ron model of an elliptic curve is the smooth locus of the minimal regular model (see \cite[Proposition 1.5.1]{Bosch1990}).
By the N\'{e}ron mapping property, we have a group action $\sigma\colon \mathcal{G} \times \mathcal{A} \times \mathcal{B} \rightarrow \mathcal{A} \times \mathcal{B}$, which is a free action since $\mathcal{G} \hookrightarrow \mathcal{A}$ is a closed immersion 
by Lemma \ref{neronsub}.
In the following, we assume that $\mathcal{G}$ is finite \'{e}tale over $\so_{K}$ (i.e., $G$ admits good reduction).
Then one can extend the action $\mathcal{G} \times \mathcal{A} \rightarrow \mathcal{A}$ to $\mathcal{G} \times \overline{\mathcal{A}} \rightarrow \overline{\mathcal{A}}$ uniquely. 
Indeed, the connected component of $\mathcal{G}$ is the spectrum of discrete valuation ring $\so_{L}$ which is unramified over $\so_{K}$.
Since $\mathcal{A}_{\so_{L}}$ is also a minimal regular model of $A_{L}$ (\cite[Proposition 9.3.28]{Liu2002}), one can use the minimality to extend this action (\cite[Proposition 9.3.13]{Liu2002}).
Similarly, one can extend the action $\mathcal{G} \times \mathcal{B} \rightarrow \mathcal{B}$ uniquely, therefore we have the action $\Sigma \colon \mathcal{G}\times \overline{\mathcal{A}} \times \overline{\mathcal{B}} \rightarrow \overline{\mathcal{A}}\times \overline{\mathcal{B}}$.

\begin{theoremsub}\label{genneron}
Let $A, B, G, \mathcal{A}, \mathcal{B}, \mathcal{G} ,\sigma$ and $\Sigma$ as above (so we suppose that $G$ admits good reduction).
Then, if $\Sigma$ is a free action (for example, the case where $A$ admits good reduction), the quotient $(\mathcal{A}\times \mathcal{B}) /\mathcal{G}$ is the N\'{e}ron model of $X$. 
\end{theoremsub}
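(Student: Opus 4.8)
The plan is to realize the candidate model $\mathcal{X} \coloneqq (\mathcal{A}\times\mathcal{B})/\mathcal{G}$ as an open subscheme (the smooth locus) of a proper model, and then to deduce the N\'{e}ron property from Lemma \ref{finet}. First I would record the formal consequences of the hypotheses. Since $\mathcal{G}$ is finite \'{e}tale over $\oo_K$ and $\sigma$ is free (recall $\mathcal{G}\hookrightarrow\mathcal{A}$ is a closed immersion by Lemma \ref{neronsub}), the quotient $\mathcal{X}=(\mathcal{A}\times\mathcal{B})/\mathcal{G}$ exists as a smooth separated finite type $\oo_K$-scheme, the quotient map $q\colon\mathcal{A}\times\mathcal{B}\to\mathcal{X}$ is finite \'{e}tale, and $\mathcal{X}_K\simeq(A\times B)/G\simeq X$. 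Moreover $\mathcal{A}\times\mathcal{B}$ is the N\'{e}ron model of $A\times B$, since N\'{e}ron models commute with finite products. Because $\Sigma$ is assumed free (and $\mathcal{G}$ is finite \'{e}tale), the same construction applied to the minimal regular models produces a \emph{proper} $\oo_K$-model $\overline{\mathcal{X}}\coloneqq(\overline{\mathcal{A}}\times\overline{\mathcal{B}})/\mathcal{G}$ together with a finite \'{e}tale cover $c\colon\overline{\mathcal{A}}\times\overline{\mathcal{B}}\to\overline{\mathcal{X}}$. As $\mathcal{A}\times\mathcal{B}$ is the $\mathcal{G}$-stable smooth locus of $\overline{\mathcal{A}}\times\overline{\mathcal{B}}$ over $\oo_K$ and $q$ is open, the open immersion $\mathcal{A}\times\mathcal{B}\hookrightarrow\overline{\mathcal{A}}\times\overline{\mathcal{B}}$ descends to an open immersion $\mathcal{X}\hookrightarrow\overline{\mathcal{X}}$ with $c|_{\mathcal{A}\times\mathcal{B}}=q$.

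The heart of the argument is to verify that $\mathcal{X}$ is a weak N\'{e}ron model of $X$; Lemma \ref{finet}, applied to the finite \'{e}tale morphism $q\colon\mathcal{A}\times\mathcal{B}\to\mathcal{X}$ emanating from the N\'{e}ron model $\mathcal{A}\times\mathcal{B}$, then upgrades this to the assertion that $\mathcal{X}$ is the N\'{e}ron model of $X$. So let $P\in X(K^{\sh})$. Base-changing to the strictly Henselian discrete valuation ring $\oo_{K^{\sh}}$, I would first extend $P$ to $\tilde P\in\overline{\mathcal{X}}(\oo_{K^{\sh}})$ by the valuative criterion of properness for $\overline{\mathcal{X}}$. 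Pulling back the finite \'{e}tale cover $c$ along $\tilde P$ yields a finite \'{e}tale scheme over $\Spec\oo_{K^{\sh}}$; since $\oo_{K^{\sh}}$ is strictly Henselian, this cover is a disjoint union of copies of $\Spec\oo_{K^{\sh}}$ and therefore admits a section, producing a lift $\hat P=(\hat P_A,\hat P_B)\in(\overline{\mathcal{A}}\times\overline{\mathcal{B}})(\oo_{K^{\sh}})$ with $c\circ\hat P=\tilde P$.

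It then remains to show that $\hat P$ factors through the open subscheme $\mathcal{A}\times\mathcal{B}$, for in that case $\tilde P=c\circ\hat P=q\circ\hat P$ factors through $\mathcal{X}$, giving the sought extension of $P$. For the first coordinate, the generic point of $\hat P_A$ lies in $A(K^{\sh})$ and hence extends to an $\oo_{K^{\sh}}$-point of the N\'{e}ron model $\mathcal{A}$ by the (weak) N\'{e}ron property; composing with the open immersion $\mathcal{A}\hookrightarrow\overline{\mathcal{A}}$ gives an extension to $\overline{\mathcal{A}}$, and since $\overline{\mathcal{A}}$ is separated this extension must coincide with $\hat P_A$, both extending the same generic point. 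Thus $\hat P_A$ factors through $\mathcal{A}$, and the identical argument gives $\hat P_B$ factoring through $\mathcal{B}$, whence $\hat P$ factors through $\mathcal{A}\times\mathcal{B}$ as required. I expect the main obstacle to be precisely this comparison step: one must ensure that the extension furnished by properness of $\overline{\mathcal{A}}$ and the one furnished by the N\'{e}ron property of $\mathcal{A}$ genuinely agree, which is where separatedness of $\overline{\mathcal{A}}$ is essential, and where the freeness of $\Sigma$ (not merely of $\sigma$) is used in order that $\overline{\mathcal{X}}$ exist as a proper model onto which the valuative criterion applies.
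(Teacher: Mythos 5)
Your proof is correct, and its skeleton coincides with the paper's: both form the proper model $\overline{\mathcal{X}}\coloneqq(\overline{\mathcal{A}}\times\overline{\mathcal{B}})/\mathcal{G}$ (this is exactly where the freeness of $\Sigma$, rather than just of $\sigma$, is needed), verify that $(\mathcal{A}\times\mathcal{B})/\mathcal{G}$ is a weak N\'{e}ron model of $X$, and then conclude by Lemma \ref{finet} applied to the finite \'{e}tale morphism from the N\'{e}ron model $\mathcal{A}\times\mathcal{B}$ of $A\times B$. The one step you carry out differently is the verification of the weak N\'{e}ron property. The paper notes that $\overline{\mathcal{X}}$ is \emph{regular} (being a finite \'{e}tale quotient of the regular scheme $\overline{\mathcal{A}}\times\overline{\mathcal{B}}$) and invokes the valuative criterion together with \cite[Lemma 3.1]{Liu2016}, i.e.\ the standard fact that an $\oo_{K^{\sh}}$-section of a regular proper model automatically factors through the smooth locus, which here is $(\mathcal{A}\times\mathcal{B})/\mathcal{G}$. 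You instead lift the extended section $\tilde P$ through the finite \'{e}tale cover $c$ (using that finite \'{e}tale covers of $\Spec\oo_{K^{\sh}}$ split) and then force the lift into $\mathcal{A}\times\mathcal{B}$ coordinatewise via the N\'{e}ron property of $\mathcal{A}$ and $\mathcal{B}$ and the separatedness of $\overline{\mathcal{A}}\times\overline{\mathcal{B}}$. Both routes are sound: the paper's is shorter because it quotes the regular-proper-model criterion wholesale, while yours avoids any appeal to regularity of the quotient and is closer in spirit to the lifting-over-a-strictly-Henselian-base arguments the paper deploys later in the proof of Theorem \ref{gengenneron}. The only caveat is that your splitting argument takes place over $\oo_{K^{\sh}}$, which is harmless here since the weak N\'{e}ron condition is by definition tested on $K^{\sh}$-points.
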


\begin{proof}
By the assumption, the scheme $(\overline{\mathcal{A}}\times \overline{\mathcal{B}})/\mathcal{G}$ is a finite \'{e}tale quotient of $\overline{\mathcal{A}} \times \overline{\mathcal{B}}$. 
Therefore $(\overline{\mathcal{A}} \times \overline{\mathcal{B}})/\mathcal{G}$ is a regular scheme which is proper over $\so_{K}$. 
Since its smooth locus is $(\mathcal{A} \times \mathcal{B})/\mathcal{G}$, by the valuative criterion of the properness and 
\cite[Lemma 3.1]{Liu2016}, one can show that $(\mathcal{A}\times \mathcal{B})/\mathcal{G}$ is a weak N\'{e}ron model of $X$. By Lemma \ref{finet}, it finishes the proof.
\end{proof}

This proof is based on the philosophy `the smooth locus of a minimal model is the N\'{e}ron model'. 
On the other hand, one can prove the existence of N\'{e}ron models in a more general setting, as follows.  

\begin{theoremsub}\label{gengenneron}
Let $K$ be a strictly Henselian discrete valuation field with residue field $k$ whose characteristic is different from $2$ and $3$, and $X$ a bielliptic surface over $K$ which admits a rational point $x \in X(K)$. Let $X \simeq (A \times B) /G$ be the isomorphism given in Remark \ref{structurealb}.
If $G$ admits good reduction (i.e., the inertia group $I_{K}$ acts trivially on $G(\overline{K})$),
then $X$ admits a N\'{e}ron model. In particular, for any bielliptic surface $X$ over $K$, there exists a finite separable extension $L/K$ such that $X_{L'}$ admits a N\'{e}ron model for any finite extension $L'/L$.
\end{theoremsub}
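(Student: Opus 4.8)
The plan is to deduce the final ``in particular'' assertion from the conditional statement, and to prove the conditional statement by realizing the N\'{e}ron model as a gluing of finitely many \emph{twisted} quotient models.

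First I would dispose of the reduction. Writing an arbitrary bielliptic surface as $X\simeq(A\times B)/G$ with $G=A[n]$, $n\in\{2,3,4,6,8,9\}$, as in Remark \ref{structurealb}, the inertia $I_{K}$ acts on $G(\overline{K})$ through a finite tame, hence separable, quotient (as $n$ is prime to the residue characteristic). Thus there is a finite separable $L/K$ with $I_{L'}$ acting trivially on $G(\overline{K})$ for every finite $L'/L$. Each $\oo_{L'}$ is again a strictly Henselian discrete valuation ring, $X_{L'}$ is again a bielliptic surface with a section, and $G_{L'}$ has good reduction; so the conditional statement over $\oo_{L'}$ gives the N\'{e}ron model of $X_{L'}$, completing the reduction. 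It remains to treat the case in which $G$ has good reduction, where, since $K=K^{\sh}$, the group $G$ is constant, $\mathcal{G}$ is finite \'{e}tale, and $\pi\colon A\times B\to X$ is a $G$-torsor.

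The obstruction to Theorem \ref{genneron} is that the naive quotient $\mathcal{Y}_{0}\coloneqq(\mathcal{A}\times\mathcal{B})/\mathcal{G}$ satisfies $\mathcal{Y}_{0}(\oo_{K})=(A(K)\times B(K))/G$, which is exactly the set of $P\in X(K)$ whose $G$-torsor is trivial; since $X(K)\to H^{1}(K,G)$ is typically nonzero, $\mathcal{Y}_{0}$ fails to be a weak N\'{e}ron model. To recover the remaining points I would twist. Over the strictly Henselian $K$ the set $H^{1}(K,G)$ is finite, and for each class $c$ I twist $A\times B$ by $c$ through the variety action $g\cdot(a,b)=(a+\iota(g),\alpha(g)b)$. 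As this action is the product of a translation action on $A$ and an action on $B$, the twist splits as $E_{c}\simeq A_{c}\times_{K}B_{c}$, where $A_{c}$ is the $A$-torsor obtained from $c$ via $\iota$ and $B_{c}$ is the form of $B$ obtained via $\alpha$; both are smooth proper genus-one curves, the residual $G$-action satisfies $E_{c}/G\simeq X$, and the $G$-torsor $E_{c}\to X$ realizes precisely the class-$c$ points of $X(K)$. By \cite{Liu2016} the curves $A_{c},B_{c}$ admit N\'{e}ron models $\mathcal{A}_{c},\mathcal{B}_{c}$, so $\mathcal{E}_{c}\coloneqq\mathcal{A}_{c}\times\mathcal{B}_{c}$ is the N\'{e}ron model of $E_{c}$; the $G$-action extends to a $\mathcal{G}$-action on $\mathcal{E}_{c}$ which is free by the translation argument of Lemma \ref{neronsub} applied on the $\mathcal{A}$-torsor factor $\mathcal{A}_{c}$, so $\mathcal{X}_{c}\coloneqq\mathcal{E}_{c}/\mathcal{G}$ is a smooth separated finite type model of $X$ with $\mathcal{E}_{c}\to\mathcal{X}_{c}$ finite \'{e}tale. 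Over the finitely many $c$, the family $\{\mathcal{X}_{c}\}$ is then a weak N\'{e}ron model in the sense of Definition \ref{Neron}(2): any $P\in X(K^{\sh})=X(K)$ has a class $c$, lifts to $E_{c}(K)=\mathcal{E}_{c}(\oo_{K})$, and descends to $\mathcal{X}_{c}(\oo_{K})$. Crucially, $\mathcal{X}_{c}(\oo_{K})$ consists of exactly the class-$c$ points, so each $K$-point extends through a single chart.

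The last and hardest step is to glue the charts $\{\mathcal{X}_{c}\}$, all sharing the generic fiber $X$, into one smooth separated finite type model $\mathcal{X}$ satisfying the N\'{e}ron mapping property of Definition \ref{Neron}(3). I would glue them along the maximal open subschemes on which the identity of $X$ spreads to an isomorphism; the genuine difficulties are then (i) separatedness of $\mathcal{X}$ --- that two integral sections agreeing on the generic fiber coincide even when they a priori lie in different charts --- and (ii) upgrading the weak N\'{e}ron mapping property of Proposition \ref{weakneronproperty} to an honest extension of every $Z_{K}\to X$, for $Z$ smooth over $\oo_{K}$, to a morphism $Z\to\mathcal{X}$ compatible across charts. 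The main tool I would exploit is the translation action of the Albanese: $A$ acts on $X$ covering translation on $A/G\simeq A$, this extends to an $\mathcal{A}$-action on each $\mathcal{E}_{c}$, hence on $\mathcal{X}_{c}$, and --- in the spirit of the group-smoothening construction of N\'{e}ron models in \cite{Bosch1990} --- one uses $\mathcal{A}$-translates to spread the smooth locus across the special fibers, to recognize the overlaps between charts as separated open immersions, and to convert the rational extensions of Proposition \ref{weakneronproperty} into morphisms, with properness of the base $B/G\simeq\Pro^{1}_{K}$ controlling the fibral directions. Making this gluing precise --- especially verifying that the translation structure forces the charts to be mutually separated and that no section is lost in passing between them --- is where the essential work lies; granting it, $\mathcal{X}$ is the N\'{e}ron model of $X$, and together with the reduction above the theorem follows.
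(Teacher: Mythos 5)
Your overall architecture matches the paper's: finitely many ``recentred'' quotient charts of the form $(\mathcal{A}_{\bullet}\times\mathcal{B}_{\bullet})/\mathcal{G}_{\bullet}$, glued along the common generic fiber $X$, indexed by the obstruction for a $K$-point of $X$ to lift to $A\times B$. Your index set, the image of $X(K)$ in $H^{1}(K,G)$, agrees with the paper's $\pi_{0}(\mathcal{A}_{k})/n\pi_{0}(\mathcal{A}_{k})$ via the connecting map $(A/G)(K)\to H^{1}(K,G)$, and since the relevant twists $A_{c}$, $B_{c}$ acquire $K$-points they are again elliptic curves, so your charts are in substance the paper's $\mathcal{X}_{i}=(\mathcal{A}_{i}\times\mathcal{B}_{i})/\mathcal{G}_{i}$ attached to the representative sections $x_{i}$ (in particular no appeal to \cite{Liu2016} is needed for the factors). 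The reduction of the ``in particular'' clause to the conditional statement is also fine.

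The gap is that you stop exactly where the proof begins: you explicitly ``grant'' the separatedness of the glued scheme and the N\'{e}ron mapping property, and the hints you offer for them would not go through as stated. For separatedness, disjointness of the subsets $\mathcal{X}_{c}(\oo_{K})\subset X(K)$ is not enough: the valuative criterion must be checked against an arbitrary discrete valuation ring $R$ over $\oo_{K}$, and the class in $H^{1}(K,G)$ of the common generic point is not a usable invariant there, because the restriction map $H^{1}(K,G)\to H^{1}(\Frac R,G)$ can have nontrivial kernel; thus two sections lying in different charts could a priori extend the same $\Frac R$-point. The paper circumvents this by tracking instead where the closed point of $\Spec R$ lands in $\pi_{0}(\mathcal{A}_{k})$: an $R$-section of $\mathcal{X}_{i}$ forces the reduction of its Albanese image into $\Pi_{i}=\Image\bigl(n_{\mathcal{A}_{i}}\colon\pi_{0}(\mathcal{A}_{k})\to\pi_{0}(\mathcal{A}_{k})\bigr)$, and the $\Pi_{i}$ are pairwise disjoint translates of $\Pi_{0}$ by the very choice of the $x_{i}$ --- an argument insensitive to the residue field of $R$. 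For the mapping property, the correct mechanism is not translation actions or group smoothening but the argument of Lemma \ref{finet}: extend the finite \'{e}tale cover $U\times_{\mathcal{X}_{i}}(\mathcal{A}_{i}\times\mathcal{B}_{i})\to U$ across all of $Z$ by Zariski--Nagata purity, extend the lifted map using the N\'{e}ron mapping property of $\mathcal{A}_{i}\times\mathcal{B}_{i}$, and descend via the equalizer diagram, which requires the separatedness just established. You need to supply both arguments (or genuinely adapt them to your torsor-theoretic indexing) before the proof is complete.
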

\begin{remarksub}
Since we work over a strictly Henselian discrete valuation field, 
if $X(K) = \emptyset$, then $X \rightarrow \Spec K \rightarrow \Spec \so_{K}$ itself is the N\'{e}ron model of $X$ by Hensel's lemma.
Thus, to prove the existence of a N\'{e}ron model, we may assume that $X(K) \neq \emptyset$.
\end{remarksub}
\begin{proof}
It suffices to show the first statement.
Let $X \simeq (A \times B)/G$ be a bielliptic surface with a rational point $x \in X(K)$, as in the assumption.
Let $n$ be the integer as in Remark \ref{structurealb}. 
Since we are working with a strictly Henselian discrete valuation field, the following hold.
\begin{itemize}
\item
The group scheme $G$ consists of $K$-rational points.
\item
Every connected component of the special fiber of a N\'{e}ron model $\mathcal{A}$ has a $k$-rational point. Especially, every component is geometrically connected.
\item
Let 
\[
\Phi\colon
X(K) \rightarrow A/G(K) \rightarrow A(K) \simeq \mathcal{A}(\so_{K}) \rightarrow \pi_{0}(\mathcal{A}_{k})/n\pi_{0}(\mathcal{A}_{k})
\]
be the composition of the Albanese morphism with the reduction maps.
Here, the second arrow is an isomorphism given by the $n$-multiplication map as in Remark \ref{structurealb}.
Then, there exist $K$-valued points $x_{0}=x, x_{1}, \ldots, x_{s} \in X(K)$ which give a complete set of representatives of $\Image \Phi$. 
\end{itemize}
For each rational points $x_{i}$, we have a description 
\[
X \simeq (A_{i} \times B_{i}) /G_{i}
\]
given in Remark \ref{structurealb}. 
Here, by the definition every $A_{i}$ is the Albanese variety of $X$, so every $A_{i}$ is naturally isomorphic to each other. The only difference is that the zero section of $A_{i}$ is given by $\alb(x_{i})$. As in Remark \ref{structurealb}, each $G_{i}$ consists of $K$-rational points.

Let $\mathcal{A}_{i}, \mathcal{B}_{i}, \mathcal{G}_{i}$ be the N\'{e}ron models of $A_{i}, B_{i}, G_{i}$. By the N\'{e}ron mapping property, one can extend the group action to N\'{e}ron models. 
By Lemma \ref{neronsub}, a N\'{e}ron model $\mathcal{G}_{i}$ is a finite \'{e}tale subgroup scheme of $\mathcal{A}_{i}$.
Let $\mathcal{X}_{i}$ be the finite \'{e}tale quotient $(\mathcal{A}_{i} \times \mathcal{B}_{i})/\mathcal{G}_{i}$. As before, the quotient $\mathcal{X}_{i}$ is smooth separated of finite type over $\so_{K}$.
Let $\mathcal{X}$ be the scheme obtained by gluing $\mathcal{X}_{i}$ together on the generic fibers.
By definition, $\mathcal{X}$ is a smooth finite type scheme over $\so_{K}$ satisfying $\mathcal{X}_{K}\simeq X$. We shall prove that this $\mathcal{X}$ is the desired N\'{e}ron model. \\
First, we shall prove that $\mathcal{X}$ is separated over $\mathcal{O}_K$. 
Let $R$ be a discrete valuation ring over $\so_{K}$,
and $F$ the fraction field of $R$. 
By the valuative criterion of separatedness, it is enough to show that $\mathcal{X}(R) \rightarrow \mathcal{X}(F)$ is injective. 
Assume that there exist $t_{1}, t_{2} \colon \Spec R \rightarrow \mathcal{X}$ which are different $R$-valued points going to the same $F$-valued point. 
Clearly, each $t_{l}$ factors through some $\mathcal{X}_{i_{l}} \hookrightarrow \mathcal{X}$.
If $t_{l}$ factors through the same component $\mathcal{X}_{i}$, by the separatedness of $\mathcal{X}_{i}$, we have $t_{1}=t_{2}$. 
Therefore each $t_{l}$ factors through $\mathcal{X}_{i_{l}}$ with $i_{1}\neq i_{2}$ and each $t_{l}$ does not factors through the generic fiber $X$. 
Let 
$s_{l} := \alb(t_{l}) \in \mathcal{A}_{i_{l}} (R),$ 
where $\alb$ is the extension of the Albanese morphism to $\mathcal{X} \rightarrow \mathcal{A}$. 
The morphism $\alb|_{\mathcal{X}_{i_{l}}}$ is written as the following composition,
\[ 
\mathcal{X}_{i_{l}} := (\mathcal{A}_{i_{l}} \times \mathcal{B}_{i_{l}}) /\mathcal{G}_{i_{l}} \rightarrow \mathcal{A}_{i_{l}}/\mathcal{G}_{i_{l}} \rightarrow \mathcal{A}_{i_{l}},
\]
where the last map is given by $n$-multiplication morphism. 
As remarked above, for any $0\leq i,j \leq s$, the varieties $\mathcal{A}_{i}$ and $\mathcal{A}_{j}$ are naturally isomorphic (we denote it by $\mathcal{A}$), and there exists the following commutative diagram.
\begin{equation}
\vcenter{
\xymatrix{
\mathcal{A} \ar[r]^{t_{i,j}} \ar[d]^{n_{\mathcal{A}_{i}}} & \mathcal{A} \ar[d]^{n_{\mathcal{A}_{j}}} \\
\mathcal{A} \ar[r]^{t_{i,j}} & \mathcal{A}
}
}
\end{equation}
Here $t_{i,j}$ is a translation on $\mathcal{A}$ which sends $\alb(x_{i})$ to $\alb(x_{j})$, and $n_{\mathcal{A}_{i}}$ is $n$-multiplication map as the group scheme $\mathcal{A}_{i}$.  Note that we identify $\alb(x_{i})$ with its extension to the $\so_{K}$-valued point of $\mathcal{A}$.
Let $\m$ be the maximal ideal of $\Spec R$. Since $t_{l} (\m)$ is contained in the special fiber of $\mathcal{X}_{i_{l}}$, the point $s_{l} (\m)$ is contained in the special fiber of $\mathcal{A}$. 
By the above description of $\alb|_{\mathcal{X}_{i_{l}}}$, 
the point $s_{l}(\m)$ lies in 
the component which is contained in $\Pi_{i_{l}}$, where 
\[
\Pi_{i} := \Image (n_{\mathcal{A}_{i}} \colon \pi_{0}(\mathcal{A}_{k}) \rightarrow \pi_{0}(\mathcal{A}_{k})).
\] 
By the above commutative diagram, as subsets of the component group $\pi_{0}(\mathcal{A}_{k})$, we have 
\[
\Pi_{i_{l}} = t_{0,i_{l}}\Pi_{0}= (\alb(x_{i_{l}})-\alb{(x_{0}}))\Pi_{0}.
\]
Since we have $i_{1}\neq i_{2}$, by the definition of $x_{i}$, we have 
that $\Pi_{i_{1}}$ and $\Pi_{i_{2}}$ are disjoint in $\pi_{0}(\mathcal{A}_{k}).$
So we have $s_{1} \neq s_{2}$. 
However, since $s_{1}$ and $s_{2}$ give the same $F$-valued point of $\mathcal{A}$, we have $s_{1}=s_{2}$ by the separatedness of $\mathcal{A}$. This is a contradiction, thus $\mathcal{X}$ is separated over $\so_{K}$.

Next, we shall prove that the family $\{\mathcal{X}_{i}\}_{i=0, \ldots, s}$ is a weak N\'{e}ron model of $X$. Let $y \in X(K)$ be any $K$-valued point of $X$.
By definition, there exists a unique $x_{i} \in X(K)$ which represents $\Phi (y)$. 
Let $\alb(y)' \in \mathcal{A}(\so_{K})$ be the unique extension of $\alb(y)$.
Then the special fiber of $\alb (y)'$ lies in the union of components corresponding to $t_{0,i} \Pi_{0}=\Pi_{i}$.
We note that this union of components is the image of the $n$-multiplication map on the special fiber induced by $n_{\mathcal{A}_{i}}$ (see \cite[Lemma 7.3.1, Lemma 7.3.2]{Bosch1990}). 
Since $\so_{K}$ is strictly Henselian, the fiber of $\alb(y)'$ along $n_{\mathcal{A}_{i}}$ has a component $\Spec \so_{K}$, i.e., there exists a lift $y'_{1} \in \mathcal{A}(\so_{K})$ of $\alb(y)'$ along $n_{\mathcal{A}_{i}}$. 
Let $y_{1}\in A(K)$ be the $K$-rational point of $A$ induced by $y'_{1}$ (so $y_{1}$ is a lift of $\alb(y)$ along the $n$-multiplication map on $A_{i}$).  
Let $y_{2} := (-y_{1}) \cdot y \in X(K)$, where $\cdot$ denotes the action $A_{i}\times X \rightarrow X$ given in the proof of Lemma \ref{structure}. 
Then by the definition 
(see the commutative diagram (\ref{xyalb}) in the proof of Lemma \ref{structure}), 
we have $\alb(y_{2}) =\alb (x_{i})$, and therefore $y_{2}$ lies in $B_{i}(K)$ and we have $y=\overline{(y_{1}, y_{2})}$. 
We can extend $(y_{1}, y_{2}) \in A_{i} \times B_{i}$ to $(y'_{1}, y'_{2}) \in \mathcal{A}_{i}\times \mathcal{B}_{i}(\so_{K})$. 
Therefore $y' := \overline{(y'_{1}, y'_{2})} \in \mathcal{X}_{i}(\so_{K})$ gives a desired extension of $y$. 

Finally, we shall prove that $\mathcal{X}$ satisfies the N\'{e}ron mapping property. This part is essentially the same as in the proof of Lemma \ref{finet}, but we include it for the sake of completeness.
Let $Z$ be any smooth $\so_{K}$-scheme, and $u_{K} \colon Z_{K}\rightarrow X$ be a $K$-morphism. 
By Remark \ref{remneron}, it suffices to show the existence of an extension of $u$.
Moreover, we may assume that $Z$ has an irreducible special fiber (in the general case, one can glue them). 
By Proposition \ref{weakneronproperty} (2), there exists an extension of $u_{K}$ to an $\so_{K}$-rational map $u\colon Z \dashrightarrow \mathcal{X}_{i}$ for some $0 \leq i \leq s$. Therefore, there exists an open subscheme $U \hookrightarrow Z$ where $u$ is defined, such that $U$ contains $Z_{K}$ and the generic point of $Z_{k}$.
Then the finite \'{e}tale covering 
\[
\widetilde{U}:= U\times_{\mathcal{X}_{i}} (\mathcal{A}_{i} \times \mathcal{B}_{i}) \rightarrow U
\]
can be extended to the finite \'{e}tale covering $\widetilde{Z} \rightarrow Z$ by taking the normalization of $Z$ in $\widetilde{U}$ and using the Zariski-Nagata purity theorem. By the N\'{e}ron mapping property, we have 
\[
\mathcal{A}_{i} \times \mathcal{B}_{i}(\widetilde{U}) = \mathcal{A}_{i} \times \mathcal{B}_{i} (\widetilde{U}_{K}) = \mathcal{A}_{i} \times \mathcal{B}_{i}(\widetilde{Z}),
\]
so the base change morphism $\widetilde{u} \colon \widetilde{U} \rightarrow \mathcal{A}_{i} \times \mathcal{B}_{i}$ can be extended to $\widetilde{u} \colon \widetilde{Z}\rightarrow \mathcal{A}_{i} \times \mathcal{B}_{i}$. 
Hence we have a morphism $u_{\widetilde{Z}} \colon \widetilde{Z} \rightarrow \mathcal{X}_{i}$, and by the uniqueness of extension, one can descends $u_{\widetilde{Z}}$ to $u_{Z}$ which is a desired extension of $u_{K}$. It finishes the proof.
\end{proof}

\begin{bibdiv}
\begin{biblist}

\bib{Andre1996}{article}{
      author={Andr\'e, Yves},
       title={On the {S}hafarevich and {T}ate conjectures for hyper-{K}\"ahler varieties},
        date={1996},
        ISSN={0025-5831},
     journal={Math. Ann.},
      volume={305},
      number={2},
       pages={205\ndash 248},
         url={https://doi.org/10.1007/BF01444219},
      review={\MR{1391213}},
}

\bib{Badescu2001}{book}{
      author={B{\u{a}}descu, Lucian},
       title={Algebraic surfaces},
      series={Universitext},
   publisher={Springer-Verlag, New York},
        date={2001},
        ISBN={0-387-98668-5},
         url={https://doi.org/10.1007/978-1-4757-3512-3},
        note={Translated from the 1981 Romanian original by Vladimir Ma\c{s}ek and revised by the author},
      review={\MR{1805816}},
}

\bib{Bosch1990}{book}{
      author={Bosch, Siegfried},
      author={L\"{u}tkebohmert, Werner},
      author={Raynaud, Michel},
       title={N\'{e}ron models},
      series={Ergebnisse der Mathematik und ihrer Grenzgebiete (3) [Results in Mathematics and Related Areas (3)]},
   publisher={Springer-Verlag, Berlin},
        date={1990},
      volume={21},
        ISBN={3-540-50587-3},
         url={https://doi.org/10.1007/978-3-642-51438-8},
      review={\MR{1045822}},
}

\bib{BombieriMumford}{incollection}{
      author={Bombieri, E.},
      author={Mumford, D.},
       title={Enriques' classification of surfaces in char. {$p$}. {II}},
        date={1977},
   booktitle={Complex analysis and algebraic geometry},
   publisher={Iwanami Shoten Publishers, Tokyo},
       pages={23\ndash 42},
      review={\MR{491719}},
}

\bib{Chiarellotto2016}{article}{
      author={Chiarellotto, Bruno},
      author={Lazda, Christopher},
       title={Combinatorial degenerations of surfaces and {C}alabi-{Y}au threefolds},
        date={2016},
        ISSN={1937-0652},
     journal={Algebra Number Theory},
      volume={10},
      number={10},
       pages={2235\ndash 2266},
         url={https://doi.org/10.2140/ant.2016.10.2235},
      review={\MR{3582018}},
}

\bib{Creutz2018}{article}{
      author={Creutz, Brendan},
      author={Viray, Bianca},
       title={Degree and the {B}rauer-{M}anin obstruction},
        date={2018},
        ISSN={1937-0652},
     journal={Algebra Number Theory},
      volume={12},
      number={10},
       pages={2445\ndash 2470},
         url={https://doi.org/10.2140/ant.2018.12.2445},
        note={With an appendix by Alexei N. Skorobogatov},
      review={\MR{3911136}},
}

\bib{Geer2014}{article}{
      author={Edixhoven, Bas},
      author={Van~der Geer, Gerard},
      author={Moonen, Ben},
       title={Abelian varieties},
        date={2014},
        note={preprint available at \url{http://van-der-geer.nl/~gerard/AV.pdf}},
}

\bib{Fantechi2005}{book}{
      author={Fantechi, Barbara},
      author={G\"{o}ttsche, Lothar},
      author={Illusie, Luc},
      author={Kleiman, Steven~L.},
      author={Nitsure, Nitin},
      author={Vistoli, Angelo},
       title={Fundamental algebraic geometry},
      series={Mathematical Surveys and Monographs},
   publisher={American Mathematical Society, Providence, RI},
        date={2005},
      volume={123},
        ISBN={0-8218-3541-6},
        note={Grothendieck's FGA explained},
      review={\MR{2222646}},
}

\bib{Takamatsuhk}{article}{
      author={Fu, Lie},
      author={Li, Zhiyuan},
      author={Takamatsu, Teppei},
      author={Zou, Haitao},
       title={Unpolarized shafarevich conjectures for hyper-k\"{a}ahler varieties},
        date={2022},
     journal={arXiv preprint arXiv:2203.10391},
        note={to appear in Algebr. Geom.},
}

\bib{Faltings1992}{book}{
      author={Faltings, Gerd},
      author={W\"ustholz, Gisbert},
      author={Grunewald, Fritz},
      author={Schappacher, Norbert},
      author={Stuhler, Ulrich},
       title={Rational points},
     edition={Third},
      series={Aspects of Mathematics, E6},
   publisher={Friedr. Vieweg \& Sohn, Braunschweig},
        date={1992},
        ISBN={3-528-28593-1},
         url={https://doi.org/10.1007/978-3-322-80340-5},
        note={Papers from the seminar held at the Max-Planck-Institut f\"ur Mathematik, Bonn/Wuppertal, 1983/1984, With an appendix by W\"ustholz},
      review={\MR{1175627}},
}

\bib{GLL}{article}{
      author={Gabber, Ofer},
      author={Liu, Qing},
      author={Lorenzini, Dino},
       title={Hypersurfaces in projective schemes and a moving lemma},
        date={2015},
        ISSN={0012-7094,1547-7398},
     journal={Duke Math. J.},
      volume={164},
      number={7},
       pages={1187\ndash 1270},
         url={https://doi.org/10.1215/00127094-2877293},
      review={\MR{3347315}},
}

\bib{Grothendieck1967}{article}{
      author={Grothendieck, A.},
       title={\'{E}l\'{e}ments de g\'{e}om\'{e}trie alg\'{e}brique. {IV}. \'{E}tude locale des sch\'{e}mas et des morphismes de sch\'{e}mas {IV}},
        date={1967},
        ISSN={0073-8301},
     journal={Inst. Hautes \'{E}tudes Sci. Publ. Math.},
      number={32},
       pages={361},
         url={http://www.numdam.org/item?id=PMIHES_1967__32__361_0},
      review={\MR{238860}},
}

\bib{TakamatsuFano}{article}{
      author={Ito, Tetsushi},
      author={Kanemitsu, Akihiro},
      author={Takamatsu, Teppei},
      author={Tanaka, Yuuji},
       title={Arithmetic finiteness of mukai varieties of genus 7},
        date={2024},
     journal={arXiv preprint arXiv:2409.20046},
}

\bib{Javanpeykar2015}{article}{
      author={Javanpeykar, Ariyan},
       title={N\'{e}ron models and the arithmetic {S}hafarevich conjecture for certain canonically polarized surfaces},
        date={2015},
        ISSN={0024-6093},
     journal={Bull. Lond. Math. Soc.},
      volume={47},
      number={1},
       pages={55\ndash 64},
         url={https://doi.org/10.1112/blms/bdu095},
      review={\MR{3312964}},
}

\bib{flag}{article}{
      author={Javanpeykar, A.},
      author={Loughran, D.},
       title={Good reduction of algebraic groups and flag varieties},
        date={2015},
        ISSN={0003-889X},
     journal={Arch. Math. (Basel)},
      volume={104},
      number={2},
       pages={133\ndash 143},
         url={https://doi.org/10.1007/s00013-015-0728-7},
      review={\MR{3306042}},
}

\bib{Javanpeykar2017}{article}{
      author={Javanpeykar, A.},
      author={Loughran, D.},
       title={Complete intersections: moduli, {T}orelli, and good reduction},
        date={2017},
        ISSN={0025-5831},
     journal={Math. Ann.},
      volume={368},
      number={3-4},
       pages={1191\ndash 1225},
         url={https://doi.org/10.1007/s00208-016-1455-5},
      review={\MR{3673652}},
}

\bib{Javanpeykar2018}{article}{
      author={Javanpeykar, A.},
      author={Loughran, D.},
       title={Good reduction of {F}ano threefolds and sextic surfaces},
        date={2018},
        ISSN={0391-173X},
     journal={Ann. Sc. Norm. Super. Pisa Cl. Sci. (5)},
      volume={18},
      number={2},
       pages={509\ndash 535},
      review={\MR{3801287}},
}

\bib{Kramer}{article}{
      author={Kr{\"a}mer, Thomas},
      author={Maculan, Marco},
       title={Arithmetic finiteness of very irregular varieties},
        date={2023},
     journal={arXiv preprint arXiv:2310.08485},
}

\bib{Knutson1971}{book}{
      author={Knutson, Donald},
       title={Algebraic spaces},
      series={Lecture Notes in Mathematics, Vol. 203},
   publisher={Springer-Verlag, Berlin-New York},
        date={1971},
      review={\MR{0302647}},
}

\bib{Kolyvagin1988}{article}{
      author={Kolyvagin, V.~A.},
       title={Finiteness of {$E({\bf Q})$} and $\Sha(E,{\bf Q})$ for a subclass of {W}eil curves},
        date={1988},
        ISSN={0373-2436},
     journal={Izv. Akad. Nauk SSSR Ser. Mat.},
      volume={52},
      number={3},
       pages={522\ndash 540, 670\ndash 671},
         url={https://doi.org/10.1070/IM1989v032n03ABEH000779},
      review={\MR{954295}},
}

\bib{Kolyvagin1991}{inproceedings}{
      author={Kolyvagin, Victor~Alecsandrovich},
       title={On the {M}ordell-{W}eil group and the {S}hafarevich-{T}ate group of modular elliptic curves},
        date={1991},
   booktitle={Proceedings of the {I}nternational {C}ongress of {M}athematicians, {V}ol.\ {I}, {II} ({K}yoto, 1990)},
   publisher={Math. Soc. Japan, Tokyo},
       pages={429\ndash 436},
      review={\MR{1159231}},
}

\bib{Liu2002}{book}{
      author={Liu, Qing},
       title={Algebraic geometry and arithmetic curves},
      series={Oxford Graduate Texts in Mathematics},
   publisher={Oxford University Press, Oxford},
        date={2002},
      volume={6},
        ISBN={0-19-850284-2},
        note={Translated from the French by Reinie Ern\'{e}, Oxford Science Publications},
      review={\MR{1917232}},
}

\bib{LiedtkeMatsumoto}{article}{
      author={Liedtke, Christian},
      author={Matsumoto, Yuya},
       title={Good reduction of {K}3 surfaces},
        date={2018},
     journal={Compos. Math.},
      volume={154},
      number={1},
       pages={1\ndash 35},
}

\bib{Lang1959}{article}{
      author={Lang, S.},
      author={N\'{e}ron, A.},
       title={Rational points of abelian varieties over function fields},
        date={1959},
        ISSN={0002-9327},
     journal={Amer. J. Math.},
      volume={81},
       pages={95\ndash 118},
         url={https://doi.org/10.2307/2372851},
      review={\MR{0102520}},
}

\bib{Lawrence2020}{article}{
      author={Lawrence, Brian},
      author={Sawin, Will},
       title={The shafarevich conjecture for hypersurfaces in abelian varieties},
        date={2020},
     journal={arXiv preprint arXiv:2004.09046},
}

\bib{Liu2016}{article}{
      author={Liu, Qing},
      author={Tong, Jilong},
       title={N\'{e}ron models of algebraic curves},
        date={2016},
        ISSN={0002-9947},
     journal={Trans. Amer. Math. Soc.},
      volume={368},
      number={10},
       pages={7019\ndash 7043},
         url={https://doi.org/10.1090/tran/6642},
      review={\MR{3471084}},
}

\bib{Mazur}{article}{
      author={Mazur, Barry},
       title={Arithmetic on curves},
        date={1986},
        ISSN={0273-0979,1088-9485},
     journal={Bull. Amer. Math. Soc. (N.S.)},
      volume={14},
      number={2},
       pages={207\ndash 259},
         url={https://doi.org/10.1090/S0273-0979-1986-15430-3},
      review={\MR{828821}},
}

\bib{Neron1952}{article}{
      author={N{\'{e}}ron, Andr\'{e}},
       title={Probl\`emes arithm\'{e}tiques et g\'{e}om\'{e}triques rattach\'{e}s \`a la notion de rang d'une courbe alg\'{e}brique dans un corps},
        date={1952},
        ISSN={0037-9484},
     journal={Bull. Soc. Math. France},
      volume={80},
       pages={101\ndash 166},
         url={http://www.numdam.org/item?id=BSMF_1952__80__101_0},
      review={\MR{0056951}},
}

\bib{Neron1964}{article}{
      author={N{\'{e}}ron, Andr\'{e}},
       title={Mod\`eles minimaux des vari\'{e}t\'{e}s ab\'{e}liennes sur les corps locaux et globaux},
        date={1964},
        ISSN={0073-8301},
     journal={Inst. Hautes \'{E}tudes Sci. Publ. Math.},
      number={21},
       pages={128},
         url={https://doi.org/10.1007/bf02684271},
      review={\MR{179172}},
}

\bib{Nagamachi2019}{article}{
      author={Nagamachi, Ippei},
      author={Takamatsu, Teppei},
       title={The {S}hafarevich conjecture and some extension theorems for proper hyperbolic polycurves},
        date={2022},
        ISSN={1073-2780,1945-001X},
     journal={Math. Res. Lett.},
      volume={29},
      number={2},
       pages={541\ndash 557},
         url={https://doi.org/10.4310/mrl.2022.v29.n2.a10},
      review={\MR{4492228}},
}

\bib{Ogg}{article}{
      author={Ogg, A.~P.},
       title={Abelian curves of {$2$}-power conductor},
        date={1966},
        ISSN={0008-1981},
     journal={Proc. Cambridge Philos. Soc.},
      volume={62},
       pages={143\ndash 148},
         url={https://doi.org/10.1017/s0305004100039670},
      review={\MR{201436}},
}

\bib{Scholl1985}{article}{
      author={Scholl, A.~J.},
       title={A finiteness theorem for del {P}ezzo surfaces over algebraic number fields},
        date={1985},
        ISSN={0024-6107},
     journal={J. London Math. Soc. (2)},
      volume={32},
      number={1},
       pages={31\ndash 40},
         url={https://doi.org/10.1112/jlms/s2-32.1.31},
      review={\MR{813382}},
}

\bib{She2017}{article}{
      author={She, Yiwei},
       title={The unpolarized shafarevich conjecture for k3 surfaces},
        date={2017},
     journal={arXiv preprint arXiv:1705.09038},
}

\bib{Takamatsu2020a}{article}{
      author={Takamatsu, Teppei},
       title={On a cohomological generalization of the {S}hafarevich conjecture for {K}3 surfaces},
        date={2020},
        ISSN={1937-0652},
     journal={Algebra Number Theory},
      volume={14},
      number={9},
       pages={2505\ndash 2531},
         url={https://doi.org/10.2140/ant.2020.14.2505},
      review={\MR{4172714}},
}

\bib{Takamatsu2020b}{article}{
      author={Takamatsu, Teppei},
       title={On the {S}hafarevich conjecture for {E}nriques surfaces},
        date={2021},
        ISSN={0025-5874,1432-1823},
     journal={Math. Z.},
      volume={298},
      number={1-2},
       pages={489\ndash 495},
         url={https://doi.org/10.1007/s00209-020-02623-4},
      review={\MR{4257096}},
}

\end{biblist}
\end{bibdiv}

\end{document}